\newtheorem{theorem}{Theorem}
\newtheorem{lemma}[theorem]{Lemma}
\newtheorem{corollary}[theorem]{Corollary}
\newtheorem{definition}[theorem]{Definition}
\newtheorem{remark}[theorem]{Remark}
\newcommand{\tbf}[1]{\textbf{#1}}
\newcommand{\scr}[1]{\mathcal{#1}}
\newcommand{\eps}{\varepsilon}
\newcommand{\ignore}[1]{}
\newcommand{\er}[3]{\mathcal{G}^{(#2)}_{#1 , #3} }
\newcommand{\kout}[3]{\mathcal{G}^{#2} _{#1} (#3-\text{out})}
\title{\vspace*{-.8in}Hamiltonian Berge Cycles in Random Hypergraphs\footnote{An earlier arxiv draft of this paper did not have our stopping-time results.}}
\author{Deepak Bal\footnote{Dept.\ of Mathematical Sciences, Montclair State University, Montclair NJ, USA \qquad \texttt{deepak.bal@montclair.edu}}  \qquad Ross Berkowitz\footnote{Dept.\ of Mathematics, Yale University, New Haven CT, USA \qquad \texttt{ross.berkowitz@yale.edu}} \qquad Pat Devlin\footnote{Dept.\ of Mathematics, Yale University, New Haven CT, USA \qquad \texttt{patrick.devlin@yale.edu}}  \qquad Mathias Schacht\footnote{Dept.\ of Mathematics, Yale University, New Haven CT, USA \qquad \texttt{mathias.schacht@yale.edu}}}
\date{}
\begin{document}
\renewcommand{\thefootnote}{\fnsymbol{footnote}}
\footnotetext{AMS 2010 subject classification: 05C65, 05C45, 05C80, 05C38, 05D40, 05C20}
\footnotetext{Key words and phrases:  Hamiltonian Berge cycle, random hypergraph, threshold, $k$-out model, stopping time}
\maketitle

\begin{abstract}In this note, we study the emergence of Hamiltonian Berge cycles in random $r$-uniform hypergraphs.  For $r\geq 3$, we prove an optimal stopping-time result that if edges are sequently added to an initially empty $r$-graph, then as soon as the minimum degree is at least 2, the hypergraph almost surely has such a cycle.  In particular, this determines the threshold probability for Berge Hamiltonicity of the Erd\H{o}s--R\'enyi random $r$-graph, and we also show that the $2$-out random $r$-graph almost surely has such a cycle.  We obtain similar results for \textit{weak Berge} cycles as well, thus resolving a conjecture of Poole.
\end{abstract}

\section{Introduction}

\renewcommand*{\thefootnote}{\arabic{footnote}}
An $r$-graph (or an $r$-uniform hypergraph) on $V$ is a collection of $r$-element subsets (i.e., `edges') of $V$ (the set of `vertices').  A \textit{Berge cycle} in a hypergraph is an alternating sequence of distinct vertices and edges $(v_1, e_1, \ldots , v_n, e_n)$ where $v_{i}, v_{i+1}$ are in $e_i$ for each $i$ (indices considered modulo $n$), and a \textit{Hamiltonian Berge cycle} is one in which every vertex appears.  The Erd\H{o}s--R\'enyi random $r$-graph, denoted $\er{n}{r}{p}$, is the distribution over $r$-graphs on $\{1, 2, \ldots, n\}$ in which each edge appears independently with probability $p$.

The case $r=2$ (i.e., graphs) has received particular attention.  In this setting, Hamiltonian Berge cycles are unambiguously referred to simply as Hamiltonian cycles and the question of when a random graph is likely to contain a Hamiltonian cycle is extremely well-understood \cite{KSHamiltonian, bollobasHamiltonian, AKSHamiltonian, bollobasBook}.   Historically, Berge cycles were the first among several natural generalizations of the notion of cycle from graphs to hypergraphs \cite{berge}.  
Many of these differing notions of hypergraph cycles (e.g. loose, tight, offset, etc) have been studied in the context of random $r$-graphs, with particular emphasis on determining for which parameters $\er{n}{r}{p}$ is likely to contain such a ``Hamiltonian cycle" (see \cite{kuhn2014} for a survey and \cite{DF11, DF13, DFLS12, dudek2018, parczyk2016} for examples). Of particular relevance for us, Poole \cite{poole} focused on \textit{weak Hamiltonian Berge cycles}---which are defined as Hamiltonian Berge cycles without the restriction that the edges be distinct---and for these weaker stuctures he obtained the following sharp result.
\begin{theorem}[Poole \cite{poole}] \label{thm: weak}
Suppose $r \geq 3$ is fixed, and $p = (r-1)! \dfrac{\log n  + c_n}{n^{r-1}}$.  Then we have
\[
\lim_{n \to \infty} \mathbb{P} \left( \er{n}{r}{p} \text{ has a weak Hamiltonian Berge cycle} \right) = \begin{cases}
0, \qquad &\text{if $c_n \to -\infty$}\\
e^{-e^{-c}}, \qquad &\text{if $c_n \to c \in \mathbb{R}$}\\
1, \qquad &\text{if $c_n \to \infty$.}
\end{cases}
\]
\end{theorem}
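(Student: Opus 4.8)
The plan is to translate the problem into ordinary graph Hamiltonicity and then run a rotation--extension argument, with the isolated vertices of $\er{n}{r}{p}$ as the sole obstruction. For an $r$-graph $\mathcal{H}$ on $[n]$ let $\partial\mathcal{H}$ be its \emph{shadow graph}: the simple graph on $[n]$ with $uv$ an edge precisely when some hyperedge of $\mathcal{H}$ contains both $u$ and $v$. A weak Hamiltonian Berge cycle is nothing but a cyclic ordering of $[n]$ in which every consecutive pair is covered by a hyperedge, so $\mathcal{H}$ has one iff $\partial\mathcal{H}$ has a Hamilton cycle; moreover, since $r\geq 3$, a vertex is isolated in $\partial\mathcal{H}$ exactly when it lies in no hyperedge of $\mathcal{H}$, and otherwise has $\partial\mathcal{H}$-degree at least $r-1\geq 2$. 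Writing $G=\partial\,\er{n}{r}{p}$ and letting $X$ count the vertices of $\er{n}{r}{p}$ that lie in no hyperedge, we thus have $\mathbb{P}(G\ \text{Hamiltonian})\leq\mathbb{P}(X=0)$, and the whole point will be that this inequality is asymptotically tight. The isolated-vertex side is routine: one computes $\mathbb{E}X=n(1-p)^{\binom{n-1}{r-1}}=(1+o(1))e^{-c_n}$, and since $\binom{n-2}{r-2}p=O(\log n/n)=o(1)$ the events that two given vertices each lie in no hyperedge are asymptotically independent, so that the $k$th factorial moment of $X$ is $(1+o(1))e^{-kc_n}$. Hence when $c_n\to-\infty$, $\mathbb{E}X\to\infty$ with $\mathrm{Var}(X)=o((\mathbb{E}X)^2)$ and $X\geq 1$ w.h.p.\ by Chebyshev; when $c_n\to c$, the factorial moments converge to those of $\mathrm{Poisson}(e^{-c})$, so $\mathbb{P}(X=0)\to e^{-e^{-c}}$; and when $c_n\to\infty$, $\mathbb{E}X\to 0$, so $X=0$ w.h.p.

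The substantive part is the matching lower bound: $\mathbb{P}(X=0\ \text{but}\ G\ \text{is not Hamiltonian})=o(1)$, which I would prove for $p=(r-1)!(\log n+c_n)/n^{r-1}$ with $c_n$ bounded below (this covers the $c_n\to c$ and $c_n\to\infty$ regimes). Let $\mathrm{SMALL}$ be the set of vertices lying in fewer than $\tfrac1{100}\log n$ hyperedges. The first step is to show, by first-moment estimates tailored to the hyperedge structure (together with Poisson/Chernoff tails), that w.h.p.: \emph{(a)} $|\mathrm{SMALL}|\leq n^{1/5}$ and any two distinct vertices of $\mathrm{SMALL}$ are at $G$-distance at least $3$, so in particular $\mathrm{SMALL}$ is $G$-independent and every vertex has at most one $G$-neighbour in $\mathrm{SMALL}$; \emph{(b)} every $v\notin\mathrm{SMALL}$ has $\deg_G(v)\geq\tfrac1{200}\log n$; and \emph{(c)} for a large absolute constant $K$, every $S\subseteq[n]$ disjoint from $\mathrm{SMALL}$ with $|S|\leq n/(2K)$ satisfies $|N_G(S)\setminus S|\geq K|S|$, while every $S$ with $n/(2K)<|S|\leq n/2$ satisfies $|N_G(S)\setminus S|\geq n/(4K)$.

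Granting (a)--(c) and working on the event $X=0$ (so every $v\in\mathrm{SMALL}$ does lie in some hyperedge), I would assemble the Hamilton cycle of $G$ by an absorption trick. For each $v\in\mathrm{SMALL}$ fix a hyperedge $e_v\ni v$ and two vertices $a_v,b_v\in e_v\setminus\{v\}$ (possible as $r-1\geq 2$). By (a), each $a_v,b_v$ lies outside $\mathrm{SMALL}$ and the pairs $\{a_v,b_v\}$ are pairwise disjoint, so $M=\{\{a_v,b_v\}:v\in\mathrm{SMALL}\}$ is a matching of size at most $n^{1/5}$ in $G'=G[\,[n]\setminus\mathrm{SMALL}\,]$. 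By (b), (c) and (a), $G'$ has minimum degree $\Omega(\log n)$ and still satisfies factor-$(K-1)$ expansion for small sets; contracting the $o(n/\log n)$ edges of $M$ leaves a graph of minimum degree $\Omega(\log n)$ with factor-$2$ expansion for small sets, so the Pósa rotation--extension method produces a Hamilton cycle of it, which lifts to a Hamilton cycle $C$ of $G'$ through all of $M$. Finally, replacing each edge $\{a_v,b_v\}$ of $C$ by the path $a_v\,v\,b_v$ (legal since $\{a_v,v\}$ and $\{v,b_v\}$ are edges of $G$, both lying in $e_v$) produces a Hamilton cycle of $G$. Thus on $X=0$ the failure of Hamiltonicity forces one of the events in (a)--(c) to fail, which happens with probability $o(1)$. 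Combining with the isolated-vertex analysis yields the trichotomy: the limit is $0$ for $c_n\to-\infty$, equals $\mathbb{P}(X=0)\to e^{-e^{-c}}$ for $c_n\to c$, and is $\geq\mathbb{P}(X=0)-o(1)\to 1$ for $c_n\to\infty$.

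The routine ingredients are the reduction, the moment computation for $X$, and the rotation argument on $G'$, which by construction has minimum degree $\omega(1)$ and no low-degree vertices at all. The genuinely delicate point is the small-set expansion in (c). Unlike $\er{n}{2}{q}$ with $q$ well above its connectivity threshold, the shadow $\partial\,\er{n}{r}{p}$ at $p$ equal to the minimum-degree threshold is only marginally expanding, and the naive argument — fix $S$ avoiding $\mathrm{SMALL}$, union over the $\binom{n}{K|S|}$ candidates $T\supseteq N_G(S)\setminus S$, and bound the probability that all $\approx|S|\binom{n}{r-1}$ hyperedges meeting $S$ but not $S\cup T$ are absent — diverges, since $p\binom{n}{r-1}\approx\log n$ makes that absence probability only $\approx n^{-|S|}$, against a union factor $\approx n^{K|S|}$. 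The remedy is to union instead over the ``dense spot'' $U=S\cup N_G(S)$ (of size $<(K+1)|S|$) and exploit that a non-expanding $S$ avoiding $\mathrm{SMALL}$ forces $U$ to contain $\Omega(|S|\log n)$ hyperedges meeting $S$ in at least two vertices; the probability that a fixed set of that size contains so many hyperedges is $(O(|S|/n))^{\Omega(|S|\log n)}=n^{-\Omega(|S|\log n)}$ — the crucial gain being that there are $\Theta(|S|\log n)$, not $\Theta(|S|)$, forced hyperedges — which comfortably beats $\binom{n}{(K+1)|S|}$ for $|S|\leq n^{0.99}$, say, while the complementary range $|S|>n^{0.99}$ is handled by the naive bound, which does converge there. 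I expect this expansion estimate (and the analogous, easier, computations behind (a) and (b)) to be the main work, with everything downstream being standard.
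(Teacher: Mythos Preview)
Your reduction to the shadow graph $\partial\mathcal{H}$ is correct and is a genuinely different entry point from the paper's. The paper does not prove this theorem directly: it is quoted as Poole's result and then recovered from the stopping-time Theorem~\ref{stopping}(i), whose proof stays at the hypergraph level via the \emph{weak} $(k,2)$-expander notion, properties (P1)--(P7), and the booster iteration of Lemmas~\ref{boosters in expanders}--\ref{theorem conditions imply expansive} as adapted in Section~\ref{sec:stopping-weak}. Your isolated-vertex moment computation and the dense-spot union bound for (c) are essentially right; one small correction is that the forced hyperedges need not ``meet $S$ in at least two vertices''---what you actually use is that every hyperedge meeting $S$ is contained in $U=S\cup N_G(S)$, and since there are $\Omega(|S|\log n)$ of them while $|U|=O(|S|)$, the probability that a fixed $U$ contains that many hyperedges is $(O(|U|/n))^{(r-1)\cdot\Omega(|S|\log n)}$, which beats $\binom{n}{|U|}$.

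There is, however, a genuine gap in the absorption step. A Hamilton cycle in the contracted graph does \emph{not} automatically lift to one in $G'$ through $M$: if the two cycle-neighbours $x,y$ of the contracted vertex $w_v$ both lie in $N_{G'}(a_v)\setminus N_{G'}(b_v)$, there is no way to uncontract to a path $x\,a_v\,b_v\,y$, and nothing in (a)--(c) rules this out (for $r=3$ one cannot even fall back on common neighbours of $a_v,b_v$ coming from $e_v$, since the only such vertex is $v\in\mathrm{SMALL}$). Relatedly, ``the P\'osa rotation--extension method produces a Hamilton cycle'' skips a step: rotation only manufactures $\Omega(n^2)$ boosters, and one still needs a sprinkling or density argument to locate one inside the graph, which after contraction is no longer a clean binomial model. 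The cleanest repair is to drop the absorption and run the expander/booster argument directly on $G$ on the event $\{X=0\}$: your property (a) already guarantees the $\mathrm{SMALL}$ vertices are spread out enough that $G$ itself is a connected $(n/4,2)$-expander (each $v\in\mathrm{SMALL}$ has $\deg_G v\ge r-1\ge 2$), exactly as in the paper's Lemma~\ref{theorem conditions imply expansive}, after which the booster iteration of Lemma~\ref{stopping contains boosters} finishes.
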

\noindent Here, as in the case of graphs, the choice of $p$ is driven by the need to avoid isolated vertices (i.e., vertices not contained in any edges), whereas for (non-weak) Hamiltonian Berge cycles, we need each vertex to have degree at least $2$.

In this note, we prove these minimum degree requirements are the primary obstructions to Hamiltonicity by showing the following \textit{stopping-time} result.  We say an event happens \textit{almost surely} to mean it occurs with probability tending to $1$ as $n$ goes to infinity.  Consider the ordinary random graph process, where at each step, a uniformly random non-edge is added to the graph. Ajtai, Koml\'os, and Szemer\'edi \cite{AKSHamiltonian} and Bollob\'as \cite{bollobasHamiltonian} proved that almost surely, the graph becomes Hamiltonian at the very same step when the minimum degree becomes two. In other words, the graph becomes Hamiltonian as soon as the ``obvious obstruction'' to Hamiltonicity disappears. 
Our main result is a generalization of this result to random $r$-graphs for the notion of Berge (and weak Berge) Hamiltonicity.

\begin{theorem}\label{stopping}
Suppose $r \geq 3$ is fixed and let $\{e_1, e_2, \ldots, e_{\binom nr} \}$ denote a random ordering of the $r$-subsets of $[n]$.  Let $\mathcal{H} (t)$ denote the $r$-graph on $[n]$ with edge set $\{e_i \ : \ 1 \leq i \leq t\}$, and let $T_{k}$ denote the minimum $t$ such that every vertex of $\mathcal{H}(t)$ is contained in at least $k$ edges.  Then (i) $\mathcal{H} (T_1)$ almost surely has a weak Hamiltonian Berge  cycle, and (ii) $\mathcal{H}(T_2)$ almost surely has a Hamiltonian Berge  cycle.
\end{theorem}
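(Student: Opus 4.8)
The plan is to transfer the problem to ordinary graph Hamiltonicity via the $2$-shadow $\partial H$, the graph on $[n]$ in which $\{u,v\}$ is an edge exactly when some hyperedge of $H$ contains $\{u,v\}$. A weak Hamiltonian Berge cycle of $H$ is literally a Hamilton cycle of $\partial H$ (the cyclic vertex sequence is the cycle, and each consecutive pair lies in a hyperedge since it is an edge of $\partial H$), while a non-weak Hamiltonian Berge cycle is a Hamilton cycle of $\partial H$ together with a system of distinct representatives assigning to each of its $n$ edges a distinct hyperedge containing it. Thus (i) reduces to showing $\partial\mathcal H(T_1)$ is a.s.\ Hamiltonian, and (ii) to showing $\partial\mathcal H(T_2)$ is a.s.\ Hamiltonian and moreover admits such a representative system. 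This reduction (for the weak case) goes back to Poole~\cite{poole}; essentially all the work is in making it run at the \emph{precise} hitting time.

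I would first pin down the coarse structure of $\mathcal H(T_k)$. A routine degree-sequence analysis---the same one underlying Theorem~\ref{thm: weak}---gives that a.s.\ $m_- < T_k < m_+$, where $m_\pm = \tfrac nr\big(\log n + (k-1)\log\log n \pm \omega\big)$ for some $\omega=\omega(n)\to\infty$. Expose $\mathcal H(m_-)$ and write $[n] = V_{\mathrm{big}} \sqcup W$, where $W$ is the set of ``small'' vertices lying in at most $\log^{10}n$ hyperedges. Standard first- and second-moment estimates give that a.s.: $|W|\le\log^{20}n$; no hyperedge meets $W$ twice, the hyperedges through distinct vertices of $W$ are vertex-disjoint, and two hyperedges through a common $w\in W$ share only $w$ (so distinct $w$ have disjoint neighbourhoods, all inside $V_{\mathrm{big}}$); and $\partial\mathcal H(m_-)$ restricted to $V_{\mathrm{big}}$ is a very strong expander---connected, with every set of at most $|V_{\mathrm{big}}|/4$ vertices having at least twice as many neighbours inside $V_{\mathrm{big}}$---and stays so after the deletion of any $\mathrm{polylog}(n)$ edges. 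Each of these properties is monotone under adding hyperedges, hence is inherited by $\mathcal H(T_k)\supseteq\mathcal H(m_-)$; and by definition of $T_k$ every $w\in W$ lies in at least $k$ hyperedges of $\mathcal H(T_k)$.

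Next I would build the Hamilton cycle. Choose a deterministic $m_2$ with $m_- < m_2 < T_k$ a.s.\ and $m_2 - m_- \ge n$ (possible since $T_k - m_-$ is a.s.\ of order $\omega n \gg n$); conditionally on $\mathcal H(m_-)$ the hyperedges of $\mathcal H(m_2)\setminus\mathcal H(m_-)$ form a uniformly random set, contributing $\gg n$ essentially uniform new edges inside $V_{\mathrm{big}}$. Now, for each $w\in W$, use $\deg_{\mathcal H(T_k)}(w)\ge k$ to pick hyperedges $e_w$ (and a distinct $e'_w$ if $k=2$) through $w$ and vertices $a_w\in e_w\setminus\{w\}$, $b_w\in e'_w\setminus\{w\}$ with $a_w\ne b_w$---possible because $r-1\ge2$ (for $k=1$ take both $a_w,b_w\in e_w\setminus\{w\}$)---and form the graph $S'$ on $[n]$ consisting of $\partial\mathcal H(T_k)|_{V_{\mathrm{big}}}$ together with the edges $\{a_w,w\}$ and $\{w,b_w\}$ for $w\in W$. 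Then the $V_{\mathrm{big}}$-part of $S'$ contains the strong expander $\partial\mathcal H(m_-)|_{V_{\mathrm{big}}}$ together with the $\gg n$ essentially uniform random edges contributed in $(m_-,m_2]$ (which play the role of boosters), while $W$ is a set of $\le\log^{20}n$ vertices of degree exactly $2$ with disjoint neighbourhoods in $V_{\mathrm{big}}$; this is precisely the configuration of the random graph process at its minimum-degree-two hitting time, so the P\'osa rotation--extension / booster argument of Ajtai--Koml\'os--Szemer\'edi~\cite{AKSHamiltonian} and Bollob\'as~\cite{bollobasHamiltonian} (contract the forced paths $a_w,w,b_w$, then absorb boosters) shows $S'$ is a.s.\ Hamiltonian. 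Reading off, along a Hamilton cycle $C$ of $S'$, a hyperedge containing each edge---$e_w$, resp.\ $e'_w$, for the two edges at $w$---gives a weak Hamiltonian Berge cycle, proving (i).

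For (ii) one must in addition make the $n$ chosen hyperedges distinct, which I would do by forcing $C$ to be \emph{hyperedge-linear}: no hyperedge contains two edges of $C$. In the $W$-region this is arranged by choosing the $a_w,b_w$ in distinct hyperedges of $w$ (using $r-1\ge2$ and the a.s.\ fact that the hyperedges through a given $w$ pairwise meet only in $w$) and forbidding $C$ the $\mathrm{polylog}(n)$ edges lying inside $e\setminus W$ over hyperedges $e$ meeting $W$, so that no two edges of $C$ at or near a $W$-vertex share a hyperedge and each $e_w,e'_w$ covers only its gadget edge; in $V_{\mathrm{big}}$ each vertex lies in only $O(\log n)$ hyperedges, so only an $o(1)$ fraction of pairs of edges at a vertex share a hyperedge and hyperedge-linearity becomes a local constraint the (robust) rotation--extension argument can accommodate. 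Once $C$ is hyperedge-linear, every hyperedge covers at most one edge of $C$, so for any set $F$ of edges of $C$ the hyperedges covering $F$ number at least $|F|$, and Hall's theorem produces the required distinct representatives, proving (ii). The hard part throughout is the handling of the small vertices $W$ at the \emph{exact} hitting time: $W$ must be large enough to absorb every vertex that could destroy the bulk expander, yet small and spread out enough---relative to the hyperedge structure---that each $w\in W$, guaranteed only $k$ hyperedges, can still be threaded into the cycle through the $r-1\ge2$ vertices each of its hyperedges supplies, and for (ii) threaded in compatibly with a single global choice of distinct representatives. It is exactly this $r-1\ge2$ of slack---absent when $r=2$, where the corresponding degree-two vertices are genuinely bottlenecked, which is what makes the classical argument delicate---that lets the insertion step close.
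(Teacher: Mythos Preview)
Your approach is genuinely different from the paper's. You pass to the $2$-shadow $\partial\mathcal H$ and run rotation--extension at the graph level, then (for (ii)) seek a system of distinct hyperedge representatives; the paper instead runs P\'osa rotation--extension \emph{directly at the hypergraph level}, defining a hypergraph $(k,\alpha)$-expander and taking a \emph{booster} to be a hyperedge whose addition lengthens the longest \emph{Berge} path or creates a Hamiltonian Berge cycle. Because Berge paths by definition use distinct hyperedges, the distinctness constraint is built in from the start and no SDR or ``hyperedge-linearity'' step is ever needed. The hypergraph P\'osa lemma (Lemma~\ref{boosters in expanders}) counts boosters among all $r$-sets rather than among pairs, giving $\Omega(k^2 n^{r-2})$ of them, and the remaining pieces (a sparse expanding subgraph $\Gamma_0$, properties (P1)--(P7), the booster-hitting lemma) parallel Krivelevich's graph argument closely.

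For part (i) your shadow reduction is valid and the outline is sound, though a couple of details are off: your threshold ``$\le\log^{10}n$ hyperedges'' for $W$ makes $W=[n]$, since $\Delta(\mathcal H(m_-))=O(\log n)$---presumably you mean $\le\varepsilon\log n$ as in the paper's $SMALL$---and the shadow edges coming from hyperedges added in $(m_-,m_2]$ are not independent uniform pairs (they arrive $\binom r2$ at a time from a common $r$-set), so the booster step needs a sentence explaining why, say, one pair per hyperedge suffices.

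For part (ii), however, there is a real gap. The sentence ``hyperedge-linearity becomes a local constraint the (robust) rotation--extension argument can accommodate'' is doing all the work and is not justified. What you are asking for is a Hamilton cycle with \emph{forbidden transitions}: at each $v\in V_{\mathrm{big}}$ the $O(\log n)$ pairs $(u,w)$ with $\{u,v,w\}$ inside some hyperedge are disallowed as consecutive cycle edges. Standard rotation--extension produces \emph{some} Hamilton cycle but gives no control over which transitions it uses; adapting it to respect forbidden transitions (as in rainbow or properly-coloured Hamiltonicity) is a genuine additional argument, and the observation that only an $o(1)$ fraction of transitions are bad is necessary but nowhere near sufficient. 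Without this, your Hall/SDR step does not close. The paper's direct hypergraph approach sidesteps the whole issue: the output of its booster iteration is automatically a Hamiltonian Berge cycle with distinct hyperedges, not merely a Hamilton cycle in the shadow.
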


\noindent The statement that $\mathcal{H}(T_1)$ has a weak Hamiltonian Berge cycle resolves a conjecture by Poole \cite{poole}. Standard techniques also immediately imply both Theorem \ref{thm: weak} and the following corollary.

\begin{corollary}\label{er berge}
Suppose $r \geq 3$ is fixed, and $p = (r-1)! \dfrac{\log n  + \log\log n + c_n}{n^{r-1}}$.  Then we have
\[
\lim_{n \to \infty} \mathbb{P} \left( \er{n}{r}{p} \text{ has a Hamiltonian Berge cycle} \right) = \begin{cases}
0, \qquad &\text{if $c_n \to -\infty$}\\
e^{-e^{-c}}, \qquad &\text{if $c_n \to c \in \mathbb{R}$}\\
1, \qquad &\text{if $c_n \to \infty$.}
\end{cases}
\]
\end{corollary}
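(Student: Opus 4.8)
The plan is to deduce Corollary~\ref{er berge} from Theorem~\ref{stopping}(ii) by sandwiching $\er{n}{r}{p}$ between the random $r$-graph process $\mathcal{H}(\cdot)$ and a binomial number of edges, followed by a routine Poisson approximation for the number of low-degree vertices. Fix the standard coupling: sample a uniformly random ordering $e_1,\ldots,e_{\binom nr}$ of the $r$-subsets of $[n]$ and, independently, $M\sim\mathrm{Bin}\!\left(\binom nr,p\right)$; then $\{e_1,\ldots,e_M\}$ has exactly the law of $\er{n}{r}{p}$, while $\mathcal{H}(t)=\{e_1,\ldots,e_t\}$ is the process of Theorem~\ref{stopping}. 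Since containing a Hamiltonian Berge cycle is monotone increasing and forces minimum degree at least $2$, we get the trivial bound $\mathbb{P}(\er{n}{r}{p}\text{ is Berge Hamiltonian})\le\mathbb{P}(\er{n}{r}{p}\text{ has minimum degree}\ge2)$. For the matching lower bound, put $m^-=\flo{\binom nr p-\omega_n\sqrt{\binom nr p}}$ for a slowly growing $\omega_n\to\infty$ (say $\omega_n=\log n$). By Chebyshev $\mathbb{P}(M\ge m^-)=1-o(1)$, and on the event that $\mathcal{H}(m^-)$ has minimum degree at least $2$ we have $T_2\le m^-$, so whenever also $M\ge m^-$ we obtain $\er{n}{r}{p}=\mathcal{H}(M)\supseteq\mathcal{H}(T_2)$, which is almost surely Berge Hamiltonian by Theorem~\ref{stopping}(ii); a union bound over these three events gives $\mathbb{P}(\er{n}{r}{p}\text{ is Berge Hamiltonian})\ge\mathbb{P}(\mathcal{H}(m^-)\text{ has minimum degree}\ge2)-o(1)$. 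Since $\mathcal{H}(m^-)$ is the uniform random $r$-graph with $m^-$ edges and $m^-$ corresponds to the same $p$-scaling with $c_n$ replaced by $c_n-o(1)$, a routine comparison of the binomial and uniform models (using that minimum degree at least $2$ is monotone with limiting probability continuous in the relevant constant) shows this lower bound equals $\mathbb{P}(\er{n}{r}{p}\text{ has minimum degree}\ge2)+o(1)$. Hence it suffices to evaluate $\mathbb{P}(\er{n}{r}{p}\text{ has minimum degree}\ge2)$.

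Let $\lambda=\binom{n-1}{r-1}p=\log n+\log\log n+c_n+o(1)$ denote the expected degree of a fixed vertex $v$. Because $\binom{n-1}{r-1}p^2=o(1)$ we have $(1-p)^{\binom{n-1}{r-1}}=(1+o(1))e^{-\lambda}$, so $\mathbb{P}(\deg v=0)=(1+o(1))e^{-\lambda}$ and $\mathbb{P}(\deg v=1)=(1+o(1))\lambda e^{-\lambda}$; consequently the number $X$ of vertices of degree at most $1$ satisfies $\mathbb{E}[X]=n\bigl(\mathbb{P}(\deg v=0)+\mathbb{P}(\deg v=1)\bigr)=(1+o(1))e^{-c_n}$ whenever $|c_n|=o(\log n)$, and $\mathbb{E}[X]\to\infty$ whenever $c_n\to-\infty$, as is readily checked from the degree-$0$ and degree-$1$ terms. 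As $r$ is fixed, the events $\{\deg v\le1\}$ for distinct $v$ are only weakly dependent, and a standard second-moment (Stein--Chen) argument gives that $X$ converges in distribution to $\mathrm{Poisson}(e^{-c})$ when $c_n\to c$ and that $\mathbb{P}(X=0)\to0$ when $c_n\to-\infty$; when $c_n\to\infty$ we may, by monotonicity of Berge Hamiltonicity, replace $c_n$ with $\min\{c_n,\log\log n\}$, which still diverges and has absolute value $o(\log n)$, so $\mathbb{E}[X]\to0$ and $\mathbb{P}(X=0)\to1$ by Markov. Thus $\mathbb{P}(\er{n}{r}{p}\text{ has minimum degree}\ge2)=\mathbb{P}(X=0)$ tends to $0$, $e^{-e^{-c}}$, and $1$ in the three regimes, which together with the previous paragraph proves the corollary. (Theorem~\ref{thm: weak} follows identically from Theorem~\ref{stopping}(i): the obstruction there is an isolated vertex, whose expected count is $(1+o(1))ne^{-\lambda}$ with $\lambda=\log n+c_n+o(1)$, i.e.\ $(1+o(1))e^{-c_n}$, which is why no $\log\log n$ term appears.)

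I expect the only real effort here to be bookkeeping: verifying that passing between the process $\mathcal{H}(\cdot)$, the uniform $r$-graph $\mathcal{H}(m)$, and $\er{n}{r}{p}$ never perturbs $c_n$ by more than $o(1)$ (so the Poisson limit is unchanged), and organising the $c_n\to\pm\infty$ cases so that the elementary degree estimates above remain valid. All of this is standard; the substantive difficulty is entirely contained in Theorem~\ref{stopping}, which we take as given.
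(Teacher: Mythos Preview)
Your proposal is correct and is exactly the ``standard technique'' the paper alludes to; the paper itself does not supply a proof of Corollary~\ref{er berge} beyond the sentence ``Standard techniques also immediately imply both Theorem~\ref{thm: weak} and the following corollary.'' Your coupling of $\er{n}{r}{p}$ with the process $\mathcal{H}(\cdot)$, together with the Poisson approximation for the number of vertices of degree at most $1$, is precisely the argument one expects here, and your bookkeeping (the $o(1)$ shift in $c_n$ when passing to $m^-$, the monotonicity trick for $c_n\to\infty$) is handled correctly.
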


Previously, Clemens, Ehrenm\"uller, and Person \cite{clemens}, proved a general resilience result implying a version of Corollary \ref{er berge} with $p = \log^{k(r)} (n) / n^{r-1}$, where $k(r)$ is a constant depending on $r$. 
Our proof of Theorem \ref{stopping} follows closely a presentation of Krivelevich for the stopping time result for ordinary random graphs. 

In addition to uniform random $r$-graphs, we also study Berge Hamiltonicity of another random $r$-graph model.  The \emph{$k$-out random $r$-graph} on $V = [n]$, denoted $\kout{n}{r}{k}$, has the following distribution:
 for each $v \in V$, independently choose $k$ edges $E_v = \{e_1, e_2, \ldots, e_k\}$, where each $e_i \subseteq V$ is chosen uniformly at random from among all $r$-element sets containing $v$.  The hypergraph then consists of all edges chosen in this way: namely, $\bigcup_{v} E_v$.
We will show that it will not matter for our results if these edges are chosen with replacement or not (as is typical with this model when $r=2$), and we will adopt whichever convention suits us.

In the graph case, Hamiltonicity of this model was first studied by Fenner and Frieze \cite{fenner} who showed $\kout{n}{2}{23}$ is almost surely Hamiltonian.  This was improved incrementally by a series of authors until Bohman and Frieze \cite{bohman2009} showed that $\kout{n}{2}{3}$ is almost surely Hamiltonian (whereas $\kout{n}{2}{2}$ almost surely is not).  The generalization of the $k$-out model to hypergraphs, though natural, is not yet well-studied, and in fact the only publication we are aware of is \cite{kahn}, which addresses perfect fractional matchings.

For the $k$-out model, we settle the issue of ordinary and weak Berge Hamiltonicity completely.

\begin{theorem}\label{thm: k-out-berge-main}

For any fixed $r \geq 4$, $\kout{n}{r}{2}$ almost surely has a Hamiltonian Berge cycle.  $\kout{n}{r}{1}$ almost surely does not have a Hamiltonian Berge cycle but does have a weak Hamiltonian Berge cycle.
$\kout{n}{3}{2}$ almost surely has a  Hamiltonian Berge cycle, whereas $\kout{n}{3}{1}$ almost surely does not have a weak Hamiltonian Berge cycle.
\end{theorem}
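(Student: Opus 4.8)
Throughout, the key device is the \emph{shadow graph} $\partial\mathcal{H}$ of an $r$-graph $\mathcal{H}$: the graph on the same vertices in which $xy$ is an edge precisely when some hyperedge of $\mathcal{H}$ contains both $x$ and $y$. A weak Hamiltonian Berge cycle of $\mathcal{H}$ is the same thing as a Hamiltonian cycle of $\partial\mathcal{H}$, and a (non-weak) Hamiltonian Berge cycle is the same thing as a Hamiltonian cycle $C$ of $\partial\mathcal{H}$ together with a system of distinct representatives (SDR) assigning to each edge of $C$ a distinct hyperedge containing it. The plan is to dispatch the two negative statements and the weak-cycle statement by direct structural arguments, and the two positive $2$-out statements by a rotation--extension argument on the shadow graph, following our proof of Theorem \ref{stopping}.

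The negative statements rest on low-degree vertices. In $\kout{n}{r}{1}$ a vertex has hyperdegree at least $2$ only if another vertex's chosen edge happens to contain it, so a routine first/second moment computation gives that a.s.\ a positive fraction of vertices have hyperdegree exactly $1$; since the two edges at any vertex of a (non-weak) Berge cycle are \emph{distinct} hyperedges through that vertex, the presence of a single hyperdegree-$1$ vertex already rules out a Hamiltonian Berge cycle, giving the negative half of the $\kout{n}{r}{1}$ statement. For $\kout{n}{3}{1}$ we argue more: a hyperdegree-$1$ vertex $v$ has exactly two shadow-neighbours (the other two vertices of its unique hyperedge), so both shadow-edges at $v$ lie in \emph{every} weak Hamiltonian Berge cycle. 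Since the in-degree of a fixed vertex $u$ is asymptotically Poisson$(2)$ and each of its in-neighbours is a hyperdegree-$1$ vertex with probability $\sim e^{-2}$, the expected number of vertices $u$ having at least three hyperdegree-$1$ shadow-neighbours is $\Theta(n)$; a second-moment argument then shows such a $u$ a.s.\ exists, and three forced shadow-edges at $u$ contradict Hamiltonicity. This settles $\kout{n}{3}{1}$.

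For the positive direction we first note that $\partial\kout{n}{r}{1}$, $r\geq 4$, contains a copy of the graph $\kout{n}{2}{3}$: every vertex has among its shadow-neighbours the $r-1\geq 3$ other vertices of its own chosen edge, and by symmetry a uniformly chosen $3$-subset of those is a uniform $3$-set of the remaining vertices. As $\kout{n}{2}{3}$ is a.s.\ Hamiltonian by Bohman--Frieze \cite{bohman2009}, $\partial\kout{n}{r}{1}$ is a.s.\ Hamiltonian, proving the weak half of the $\kout{n}{r}{1}$ statement; the same coupling gives that the shadows of $\kout{n}{r}{2}$ ($r\geq 4$, where each vertex has $2(r-1)\geq 6$ shadow-out-neighbours) and of $\kout{n}{3}{2}$ (where each vertex's two hyperedges contribute two random pairs, again containing a $3$-out) are a.s.\ Hamiltonian. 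What remains, and what I expect to be the main obstacle, is to turn such a Hamiltonian cycle into an honest Berge cycle, i.e.\ to produce a Hamiltonian cycle of the shadow together with an SDR of its edges by distinct hyperedges. This is genuinely delicate because the hypergraph is sparse ($\Theta(n)$ edges), so there is only a bounded factor of slack in the SDR, and an arbitrary Hamiltonian cycle need not admit one.

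The plan for this last step is to run the rotation--extension/P\'osa machinery from the proof of Theorem \ref{stopping} directly on the shadow graph, but with the object being grown taken to be a \emph{Berge path} -- a path of the shadow together with an injective assignment of hyperedges to its edges -- and to reveal the two rounds of $k$-out choices in stages so that each extension, each rotation, and the final booster that closes the cycle can be performed while preserving injectivity of the assignment. The quantitative inputs are that with $k=2$ every vertex lies in at least two hyperedges, that the shadow expands enough to supply $\Omega(n)$ rotation endpoints, and that the only local threat to the SDR -- a cycle-edge lying in very few hyperedges -- is rare and can be kept away from the boosters. The split between $r\geq 4$ and $r=3$ reflects only how much room there is in the SDR (a hyperedge covers $\binom{r}{2}$ pairs), and the reason $k=1$ fails for $r=3$ while $k=2$ succeeds is exactly that for $k=1$ the forced shadow-edges at pendant vertices already obstruct Hamiltonicity of the shadow itself, whereas for $k=2$ no vertex is pendant.
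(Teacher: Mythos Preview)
Your treatment of the negative statements and of the weak Berge cycle in $\kout{n}{r}{1}$ ($r\ge 4$) is essentially the paper's: the paper also embeds the $(r-1)$-out graph in the shadow and cites Bohman--Frieze, and for $\kout{n}{3}{1}$ it likewise finds three degree-$1$ vertices sharing a common neighbour.

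The substantive divergence, and the real gap in your proposal, is the $2$-out Berge Hamiltonicity. You correctly identify the SDR as the crux, but your plan to rerun the P\'osa machinery on Berge paths while ``revealing the two rounds of $k$-out choices in stages'' is only a sketch; you yourself call it ``genuinely delicate,'' and nothing in what you wrote explains how rotations and the final booster are guaranteed to preserve injectivity of the hyperedge assignment. The paper sidesteps all of this with a one-line construction that solves the SDR problem for free: label the two edges chosen by $v$ as $e_v^{-}$ and $e_v^{+}$, and form a \emph{directed} graph $D$ by drawing an arc $u\to v$ for every $u\in e_v^{-}\setminus\{v\}$ and an arc $v\to w$ for every $w\in e_v^{+}\setminus\{v\}$. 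Then $D$ is distributed exactly as the random $(r-1)$-in, $(r-1)$-out digraph, which Cooper and Frieze showed is almost surely Hamiltonian for $r-1\ge 2$, covering both $r\ge 4$ and $r=3$ at once. The point is that a \emph{directed} Hamiltonian cycle in $D$ automatically supplies the SDR: the arc $(v_i,v_{i+1})$ is witnessed by either $e_{v_i}^{+}$ or $e_{v_{i+1}}^{-}$, and since the $2n$ hyperedges $\{e_v^{\pm}\}$ are almost surely pairwise distinct (Lemma~\ref{distinct edges}), no hyperedge is used twice. So the ``main obstacle'' you anticipated dissolves once you pass to the directed model; your undirected shadow-graph route throws away exactly the orientation information that makes the SDR trivial.
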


In Section \ref{sec:stopping} we prove that  $\mathcal{H}(T_2)$ almost surely has a Hamiltonian Berge cycle (Theorem \ref{stopping} (ii)). In Section \ref{sec:stopping-weak} we sketch a proof that $\mathcal{H}(T_1)$  almost surely contains a  weak Hamiltonian Berge cycle (Theorem \ref{stopping} (i)). In Section \ref{sec:k-out} we prove Theorem \ref{thm: k-out-berge-main}.
Throughout, all logarithms are natural.


%

\section{Stopping time result for Berge Hamiltonicity}\label{sec:stopping}
Our proof is very close to the proof of the stopping time result for Hamiltonicity of ordinary random graphs as presented by Krivelevich in \cite{krivelevich}. We use the famous  P\'osa extension-rotation technique and the concept of boosters. We start with a few definitions.

\begin{definition}
A hypergraph is a \emph{$(k, \alpha)$-expander} iff for all disjoint sets of vertices $X$ and $Y$, if $|Y| < \alpha |X|$ and $|X| \leq k$, then there is an edge, $e$, such that $|e \cap X| = 1$ and $e \cap Y = \emptyset$.
\end{definition}

\begin{definition}\label{def:booster}
For a hypergraph $G$, a \emph{booster} is an edge such that either $G \cup e$ has a longer (Berge) path than $G$ or $G \cup e$ is (Berge) Hamiltonian.
\end{definition}

%
%
%
%

\subsection{Statements of Lemmas}\label{sec:lemmas}

The lemmas of this section can be summarized as follows.
\begin{itemize}
\item[(i)] Non-Hamiltonian expansive hypergraphs have lots of boosters (P\'osa rotations, Lemma \ref{boosters in expanders})
\item[(ii)] $\mathcal{H}(T_2)$ almost surely has a booster for each sparse expansive sub-hypergraphs (Lemma \ref{stopping contains boosters})
\item[(iii)] $\mathcal{H}(T_2)$ almost surely contains a sparse expansive sub-hypergraph (Lemmas \ref{random has nice conditions}, \ref{theorem conditions imply expansive})
\end{itemize}

For the formal statements, we need a bit of notation.  For any $r$-graph $G$, let
\[SMALL(G) := \{v \ : \ d(v) \leq \varepsilon \log(n)\}\] 
for $\varepsilon > 0$ small, to be determined. We also define a random subgraph $\Gamma_0\subset G$ as follows. Every vertex $v\not\in SMALL(G)$ chooses a subset $E_v$ of $\eps \log n$ many edges uniformly at random from the set of all edges incident to $v$. For every $v\in SMALL(G)$, let $E_v$ be the set of all edges incident to $v$. Then the edge set  of $\Gamma_0$ is defined as $E(\Gamma_0):=\bigcup_{v}E_v.$

\begin{lemma}\label{boosters in expanders}
There exists a constant $c_r > 0$ such that if $G$ is a connected $(k,2)$-expander $r$-graph on at least $r+1$ vertices, then $G$ is Hamiltonian, or it has at least $k^2 n^{r-2} c_r$ boosters.
\end{lemma}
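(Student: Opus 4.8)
I would first reduce to the case where $G$ is not Hamiltonian, and then produce the required boosters. The plan is to carry out the P\'osa rotation--extension method for Berge paths, following the presentation of the graph case in \cite{krivelevich}; the one genuinely new feature is the bookkeeping forced by the requirement that the edges of a Berge path be pairwise distinct. Recall the Berge form of a P\'osa rotation: if $P=(v_0,f_1,v_1,\dots,f_\ell,v_\ell)$ is a Berge path, $1\le i\le\ell-1$, and $e$ is an edge with $v_\ell,v_i\in e$ that differs from $f_1,\dots,f_i,f_{i+2},\dots,f_\ell$, then
\[
(v_0,f_1,v_1,\dots,v_i,e,v_\ell,f_\ell,v_{\ell-1},\dots,f_{i+2},v_{i+1})
\]
is again a Berge path on the same vertex set, now with free endpoint $v_{i+1}$. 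Fix a longest Berge path $P$ with an endpoint $a$, and let $\mathrm{END}(P,a)$ be the set of vertices that occur as the free endpoint of a longest Berge path obtained from $P$ by a sequence of such rotations keeping $a$ fixed.

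The core of the argument --- and the step I expect to be the main obstacle --- is to show $|\mathrm{END}(P,a)|>k$. As in the graph case, one attaches to $\mathrm{END}:=\mathrm{END}(P,a)$ a ``boundary'' set $B$ with $|B|<2|\mathrm{END}|$: for each reachable endpoint $w$, a witnessing longest path $P_w$, and each rotation of $P_w$ based at $w$ whose new free endpoint is already in $\mathrm{END}$, one places the pivot vertex into $B$ (together with the at most two $P_w$-neighbours of $w$). If $|\mathrm{END}|\le k$, then applying the $(k,2)$-expander property with $X=\mathrm{END}$ and $Y=B$ yields an edge $e$ with $e\cap\mathrm{END}=\{w\}$ and $e\cap B=\emptyset$. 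If $e$ contains a vertex off $P_w$, then $P_w$ can be lengthened through $e$, contradicting maximality; otherwise all of $e$ lies on $P_w$, its pivot vertex avoids $B$, and rotating $P_w$ there produces a longest Berge path whose free endpoint lies outside $\mathrm{END}$, contradicting the definition of $\mathrm{END}$. Making this rigorous is exactly where the hypergraph bookkeeping bites: $e$ drags along $r-1$ extra vertices, it may coincide with an edge already used by $P_w$, and in each branch one must check that the rerouted sequence is an honest Berge path and that the stated contradiction genuinely occurs.

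Granting $|\mathrm{END}(P,a)|>k$, I would then double up: for each $w\in\mathrm{END}(P,a)$ fix a longest Berge path $P_w$ from $a$ to $w$ and rerun the argument with the endpoint $w$ held fixed, obtaining $|\mathrm{END}(P_w,w)|>k$. Each choice of $w\in\mathrm{END}(P,a)$ and $w'\in\mathrm{END}(P_w,w)$ then gives an ordered pair of endpoints of a common longest Berge path, so there are more than $k^2$ such ordered pairs and hence at least $k^2/2$ distinct unordered pairs $\{w,w'\}$, each joined by a longest Berge path $Q=Q_{w,w'}$; write $L$ for the common number of vertices of a longest Berge path in $G$.

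It remains to count boosters. Fix a pair $\{w,w'\}$ as above with path $Q$, and let $e$ be any $r$-set with $w,w'\in e$ and $e\notin E(Q)$. Then $Q$ together with $e$ is a Berge cycle $C$ on the $L$ vertices of $Q$: if $L=n$ this is a Hamiltonian Berge cycle in $G\cup e$; otherwise pick $z\notin V(Q)$, and --- $G$ being connected --- take the last edge $f\in E(G)$ on a shortest $z$--$V(C)$ path, so $f$ meets $V(C)$ and also contains a vertex off $V(C)$; deleting from $C$ a suitable edge incident to $f\cap V(C)$ and appending $f$ then produces a Berge path on $L+1$ vertices in $G\cup e$, which is longer than any Berge path in $G$. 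Either way $e$ is a booster. There are $\binom{n-2}{r-2}$ $r$-sets through $\{w,w'\}$, and at most $O(n)$ of them --- indeed at most two when $r=3$, since an edge of a Berge path containing both of the path's endpoints must be the first or last edge --- can lie on $Q$, so each good pair supplies at least $\Theta(n^{r-2})$ boosters. Since a single edge is counted for at most $\binom{r}{2}$ pairs of its vertices, $G$ has at least $\tfrac12 k^2\cdot\Theta(n^{r-2})/\binom{r}{2}=c_r k^2 n^{r-2}$ boosters for a suitable constant $c_r>0$, which is the claimed bound.
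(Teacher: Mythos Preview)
Your proposal is correct and follows essentially the same route as the paper: P\'osa rotation--extension to force $|\mathrm{END}|>k$, then double up to get $\Omega(k^2)$ endpoint pairs and count $\Theta(n^{r-2})$ boosters through each, dividing out by $\binom{r}{2}$ for overcounting. The one place the paper is more concrete is exactly the step you flag as delicate: rather than building $B$ from pivots of each $P_w$, the paper sets $R^{\pm}=\{v_i:\{v_{i-1},v_{i+1}\}\cap R\neq\emptyset\}$ (indices taken in the \emph{original} path), invokes the standard P\'osa invariance that $R^{\pm}$ is unchanged under rotations, and then checks in two cases --- $e$ not used in the current path versus $e$ already an edge of the path --- that any edge meeting $R$ also meets $R^{\pm}$ in another vertex; this immediately gives $|R^{\pm}\setminus R|<2|R|$ and hence $|R|\ge k$ by expansion, which is precisely the justification your $|B|<2|\mathrm{END}|$ claim needs.
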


\begin{lemma}\label{stopping contains boosters}
Let $G = \mathcal{H}(T_2)$.  Then with high probability, if $\Gamma \subseteq G$ is \textbf{any} $(n/4, 2)$-expander with $|E(\Gamma)| \leq \varepsilon \log(n) n + n$, then $\Gamma$ is Hamiltonian or $G$ has at least one booster edge of $\Gamma$.
\end{lemma}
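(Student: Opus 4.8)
The plan is to combine the P\'osa rotation bound of Lemma \ref{boosters in expanders} --- which yields $c_r(n/4)^2 n^{r-2}=\tfrac{c_r}{16}n^r=\Omega_r(n^r)$ boosters for every non-Hamiltonian $(n/4,2)$-expander (such an expander on $[n]$ is automatically connected and spanning, since otherwise some $X$ of size $\le n/4$ with $Y=\emptyset$ would violate expansion, so the hypotheses of Lemma \ref{boosters in expanders} are met) --- with a union bound over all candidate sparse expanders on $[n]$. Concretely, it suffices to show that the probability that some $(n/4,2)$-expander $\Gamma$ on $[n]$ with $f:=|E(\Gamma)|\le\varepsilon n\log n+n$ edges is at once non-Hamiltonian, a subgraph of $G=\mathcal H(T_2)$, and has none of its $\Omega_r(n^r)$ boosters inside $E(G)$, tends to $0$. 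The engine is that for each fixed $\Gamma$ this is a super-exponentially small event --- small enough to survive the union bound once $\varepsilon$ is small.

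First I would record the crude bounds $m^-:=\tfrac{n\log n}{2r}\le T_2\le\tfrac{2n\log n}{r}=:m^+$, which hold almost surely by the standard Poisson/Chernoff estimate on the number of vertices of degree $\le1$ in $\mathcal H(t)$ (in fact $T_2=(1+o(1))\tfrac{n\log n}{r}$, but only the order of $m^\pm$ and their ratio matter). On the likely event $m^-\le T_2\le m^+$ we have $E(\mathcal H(m^-))\subseteq E(G)\subseteq E(\mathcal H(m^+))$, so the negation of the lemma is contained in the stopping-time-free event $\mathcal B$ that $\mathcal H(m^+)$ contains a non-Hamiltonian $(n/4,2)$-expander $\Gamma$ with $|E(\Gamma)|\le\varepsilon n\log n+n$ having no booster in $E(\mathcal H(m^-))$; it is $\Pr[\mathcal B]$ that I would bound by a first-moment computation.

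Fix such a $\Gamma$ with $f$ edges and put $B:=|\mathrm{Boost}(\Gamma)|\ge\tfrac{c_r}{16}n^r$, noting $\mathrm{Boost}(\Gamma)\cap E(\Gamma)=\emptyset$. Reveal the unordered edge set $S$ of $\mathcal H(m^+)$, a uniform random $m^+$-subset of $\binom{[n]}{r}$: then $\{E(\Gamma)\subseteq S\}$ has probability $\binom{N-f}{m^+-f}/\binom{N}{m^+}$ (with $N=\binom nr$), and --- crucially --- this constrains only the set $S$, not the order in which its members were exposed, so conditionally on $S$ the edges of $\mathcal H(m^-)$ form a uniform random $m^-$-subset of $S$. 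Since $\Gamma$ has $\Omega_r(n^r)$ boosters spread among $N=\Theta(n^r)$ possible edges, a hypergeometric tail bound shows that a constant fraction of $S$ consists of boosters of $\Gamma$ except with probability $e^{-\Omega(m^+)}$, whence a uniform $m^-$-subset of $S$ misses them all with probability $e^{-\Omega(m^-)}=e^{-\Omega(n\log n)}$. Summing over the at most $\binom Nf$ candidates with $f$ edges and using the identity $\binom Nf\binom{N-f}{m^+-f}=\binom N{m^+}\binom{m^+}{f}$ then bounds the contribution of $f$-edge expanders to $\Pr[\mathcal B]$ by roughly $\binom{m^+}{f}\,e^{-\Omega(n\log n)}\le(em^+/f)^f\,e^{-\Omega(n\log n)}$; since $f\le\varepsilon n\log n+n<m^+/2$ the counting factor is $\exp\!\bigl(O(\varepsilon\log\tfrac1\varepsilon)\,n\log n\bigr)$ uniformly in $f$, so summing over the $O(n\log n)$ values of $f$ gives $\Pr[\mathcal B]\le\exp\!\bigl(O(\varepsilon\log\tfrac1\varepsilon)\,n\log n-\Omega(n\log n)\bigr)=o(1)$ once $\varepsilon$ is small enough in terms of $r$ and the constant $c_r$ of Lemma \ref{boosters in expanders}.

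The main obstacle is precisely this constant-chasing: the counting factor $(em^+/f)^f$ forces $\varepsilon\log(1/\varepsilon)$ into competition with a fixed multiple of $c_r$ (scaled by $m^-/m^+\asymp1$), so one must verify that the two $n\log n$-order exponents combine with the right sign --- this is exactly why the lemma insists $\varepsilon$ be small, and it is the only place in this lemma where that smallness is used. A routine point, already exploited above, is to run the argument through the stopping-time-free event $\mathcal B$ rather than conditioning on $\{m^-\le T_2\le m^+\}$: that keeps the exposure order of $\mathcal H(m^+)$ exchangeable, so that $E(\mathcal H(m^-))$ really is a uniform random $m^-$-subset of $E(\mathcal H(m^+))$. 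Equivalently, one may transport the whole estimate to $\er{n}{r}{p}$ with $p\asymp\log n/n^{r-1}$, where the presence of $E(\Gamma)$ and the absence of every booster of $\Gamma$ are independent events and the per-$\Gamma$ bound reads $p^{f}(1-p)^{B}$.
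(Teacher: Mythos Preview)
Your argument is correct and follows essentially the same route as the paper's proof: a first-moment union bound over all sparse $(n/4,2)$-expanders $\Gamma$, using the $\Omega_r(N)$ booster count from Lemma~\ref{boosters in expanders} to make the per-$\Gamma$ probability $e^{-\Omega(n\log n)}$ beat the $\binom{m}{f}$-type count of candidates. The only cosmetic difference is in handling the stopping time: the paper sums over all $m\in[m_1,m_2]$ and for each $m$ bounds the chance that $\mathcal H(m)$ contains such a $\Gamma$ yet none of its boosters, whereas you sandwich $\mathcal H(T_2)$ between $\mathcal H(m^-)$ and $\mathcal H(m^+)$ and exploit exchangeability; both are standard and yield the same exponent.
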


\begin{lemma}\label{random has nice conditions}
Let $G= \mathcal{H}(T_2)$.  Then almost surely, $G$ has the following properties:
\begin{itemize}
\item[(P1)] $\Delta(G) \leq 10 \log(n)$ 
\item[(P2)] $|SMALL(G)| \leq n^{.9}$
\item[(P3)] Let $N = \{v \in [n] \ : \ \exists e\in E(G), \ v \in e,\ SMALL(G) \cap e \neq \emptyset\}$.  No edge meets $SMALL(G)$ more than once, and no $u \notin SMALL(G)$ lies in more than one edge meeting $N \setminus \{u\}$.
\item[(P4)] If $U \subseteq [n]$ has size at most $|U| \leq \dfrac{n}{\log(n)^{1/2}}$, then there are at most $|U| \log(n) ^{3/4}$ edges of $G$ that meet $U$ more than once
\item[(P5)] for every pair of disjoint vertex sets $U,W$ of sizes $|U| \leq \dfrac{n}{\log(n)^{1/2}}$ and $|W| \leq |U| \log(n) ^{1/4}$, there are at most $\dfrac{\varepsilon \log(n) |U|}{2}$ edges of $G$ meeting $U$ exactly once and also meeting $W$
\item[(P6)] for every pair of disjoint vertex sets $U,W$ of sizes $|U| = \dfrac{n}{\log(n) ^{1/2}}$ and $|W| =n/4$, there are at least $n \log(n)^{1/3}$ edges of $G$ meeting $U$ exactly once and $W$ exactly $r-1$ times.
\end{itemize}
Almost surely (over the choices of $E_v$), $\Gamma_0$ also has the property
\begin{itemize}
\item[(P7)] for every pair of disjoint vertex sets $U,W$ of sizes $|U| = \dfrac{n}{\log(n) ^{1/2}}$ and $|W| =n/4$, there is at least one edge in $\Gamma_0$ meeting $U$ exactly once and $W$ exactly $r-1$ times.
\end{itemize}
\end{lemma}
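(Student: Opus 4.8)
\emph{Plan.} The idea is to reduce everything to estimates for a binomial random $r$-graph just above or below the minimum-degree-$2$ threshold, prove (P1)--(P6) there by first-moment and Chernoff bounds, and then deduce (P7) from (P1), (P2) and (P6) using only the internal randomness of $\Gamma_0$.

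\emph{Step 1: reduction to a binomial model.} It is standard (hitting-time concentration together with a coupling of the $r$-graph process against independent coins) that, for $p_{\pm}$ just above and below the threshold $(r-1)!\,\tfrac{\log n+\log\log n}{n^{r-1}}$, one can couple so that almost surely $\er{n}{r}{p_-}\subseteq\mathcal H(T_2)\subseteq\er{n}{r}{p_+}$. Hence it suffices to check the monotone-decreasing statements among (P1)--(P6) in $\er{n}{r}{p_+}$ and the monotone-increasing ones in $\er{n}{r}{p_-}$: in particular (P6) and the bound $|SMALL(G)|\le n^{.9}$ (which is increasing, since $SMALL$ shrinks as edges are added) go into $\er{n}{r}{p_-}$, while $\Delta\le 10\log n$ and (P4), (P5) go into $\er{n}{r}{p_+}$. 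Property (P3) is not monotone, so there I would keep the coupling and use $SMALL(\mathcal H(T_2))\subseteq SMALL(\er{n}{r}{p_-})$ together with $E(\mathcal H(T_2))\subseteq E(\er{n}{r}{p_+})$ to control the small vertices and the edges separately.

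\emph{Step 2: properties (P1)--(P6).} Fix $\eps>0$ small enough that a fixed vertex lies in $SMALL$ with probability at most $n^{-0.95}$; then (P1) is immediate since the mean degree is $\Theta(\log n)$, and (P2) follows from a second-moment bound. Properties (P4), (P5) and (P6) are one-line first-moment computations for the relevant edge counts, combined with a union bound over the $e^{O(n)}$ choices of $(U,W)$; this union bound is absorbed precisely because the stated thresholds $|U|\log^{3/4}n$, $\tfrac{\eps\log n\,|U|}{2}$ and $n\log^{1/3}n$ each lie a growing factor above (resp.\ below) the corresponding mean, so the large-deviation probabilities are of order $\exp(-\Omega(n\sqrt{\log n}))$. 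I expect (P3) to be the main obstacle: $SMALL(G)$ and the derived set $N$ are random and correlated with the edge set, so one cannot union-bound over a frozen $SMALL$. Instead I would expand the forbidden configurations of (P3) into bounded-complexity patterns of edges -- an edge with two would-be small endpoints, or a vertex $x$, two edges through $x$, and two further edges realising $N$-membership, together with two would-be small vertices -- so that each configuration contributes a factor $p^{O(1)}$ from its edges and two essentially independent smallness factors of size $n^{-c}$ with $c=c(\eps)>\tfrac12$; the expected number of such configurations is then $n^{\,1-2c+o(1)}=o(1)$.

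\emph{Step 3: property (P7).} Condition on $G=\mathcal H(T_2)$ satisfying (P1), (P2), (P6) -- a high-probability event by Step 2 -- and use only the randomness of the sets $E_v$. Fix a valid pair of disjoint sets $U,W$ (so $|U|=n/\log^{1/2}n$, $|W|=n/4$). By (P6), $G$ has at least $n\log^{1/3}n$ edges meeting $U$ exactly once and $W$ exactly $r-1$ times; by (P1)--(P2) at most $O(n^{.9}\log n)$ of these meet $SMALL(G)$, so a set $\mathcal{E}'$ of at least $\tfrac12 n\log^{1/3}n$ of them avoids $SMALL(G)$ entirely. For $u\in U\setminus SMALL(G)$ let $g(u)$ be the number of edges of $\mathcal{E}'$ whose unique $U$-vertex is $u$; then $\sum_u g(u)=|\mathcal{E}'|$ and $g(u)\le\Delta(G)\le 10\log n$. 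Since $E_u$ is a uniform $\eps\log n$-subset of $u$'s $d_G(u)$ incident edges, $\pr\!\big[E_u\cap\mathcal{E}'=\emptyset\mid G\big]\le\big(1-g(u)/d_G(u)\big)^{\eps\log n}\le n^{-\eps g(u)/d_G(u)}$, and these events are independent over $u$ given $G$; multiplying and using $d_G(u)\le 10\log n$ gives $\pr\!\big[\mathcal{E}'\cap\Gamma_0=\emptyset\mid G\big]\le n^{-\eps|\mathcal{E}'|/(10\log n)}\le\exp\!\big(-\Omega(n\log^{1/3}n)\big)$, which beats the $e^{O(n)}$ union bound over $(U,W)$; any edge of $\mathcal{E}'\cap\Gamma_0$ witnesses (P7). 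Beyond the correlated union bound for (P3), the remaining work -- keeping the monotonicity directions straight in Step 1 and running the Chernoff-plus-union-bound estimates of Step 2 -- is routine once $\eps$ is fixed small.
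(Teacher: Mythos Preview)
Your approach is essentially the paper's: Chernoff-type tail bounds plus union bounds for (P1)--(P6) in a binomial model at the threshold (the paper works directly in the $m$-edge model with $m$ in the a.s.\ window for $T_2$, and simply cites \cite{kahn} for (P2)--(P3) rather than sketching your configuration count), and for (P7) the same product-over-$u\in U$ estimate using only the randomness of the $E_v$'s together with (P1) and (P6). One caution: your blanket claim that the (P4)--(P5) deviation probabilities are $\exp(-\Omega(n\sqrt{\log n}))$ against an $e^{O(n)}$ union bound is only accurate near the top of the range $|U|\le n/\sqrt{\log n}$; for small $|U|$ both the cost $\binom{n}{|U|}$ and the tail probability scale with $|U|$, and the paper (as you will once you write it out) handles this by summing over $|U|$ termwise.
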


\begin{lemma}\label{theorem conditions imply expansive}
Deterministically, if $\Gamma_0 \subset G$ satisfies $\delta(\Gamma_0) \geq 2$ and (P3), (P4), (P5), and (P7), then $\Gamma_0$ is a connected $(n/4, 2)$-expander.  
\end{lemma}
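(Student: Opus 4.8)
The two conclusions are handled separately, and the expander property is where essentially all the work lies. Suppose it fails: there are disjoint $X, Y \subseteq [n]$ with $|X| \le n/4$, $|Y| < 2|X|$, and every edge of $\Gamma_0$ meeting $X$ exactly once also meets $Y$; write $\mathcal{E}_1$ for the set of edges of $\Gamma_0$ meeting $X$ exactly once, so every edge of $\mathcal{E}_1$ meets $Y$. The plan is to split on $|X|$.

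If $|X| \ge n/\log(n)^{1/2}$, I would just apply (P7). Since $|X \cup Y| < 3|X| \le 3n/4$, there is a set $W \subseteq [n] \setminus (X \cup Y)$ with $|W| = n/4$; fix also $U \subseteq X$ with $|U| = n/\log(n)^{1/2}$. Then (P7) gives an edge $e \in E(\Gamma_0)$ meeting $U$ exactly once and $W$ exactly $r-1$ times, so $e \subseteq U \cup W$; hence $e$ meets $X$ exactly once (inside $U$) and is disjoint from $Y$, a contradiction.

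Now suppose $|X| < n/\log(n)^{1/2}$, and write $X = X_s \sqcup X_b$ with $X_s = X \cap SMALL(G)$ and $X_b = X \setminus SMALL(G)$. The non-small vertices are densely connected: $d_{\Gamma_0}(v) \ge \varepsilon \log n$ for every $v \in X_b$, and by (P4) at most $|X|\log(n)^{3/4}$ edges meet $X$ more than once (also, applying (P4) to a pair $\{u,v\}$, any two vertices lie in at most $2\log(n)^{3/4}$ common edges); combining these, $\mathcal{E}_1$ contains at least $(1-o(1))\varepsilon\log(n)\,|X_b|$ edges whose unique $X$-vertex lies in $X_b$. The small vertices are where (P3) enters: I would show that as $(v,e)$ ranges over all pairs with $v \in X_s$ and $e$ an edge of $\Gamma_0$ through $v$, the ``external parts'' $e \setminus \{v\}$ are pairwise disjoint subsets of $[n] \setminus SMALL(G)$ — two of them sharing a vertex $u$ would put $u$ in two distinct edges meeting $N \setminus \{u\}$, or would force an edge to meet $SMALL(G)$ twice, each contradicting (P3). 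Since $\delta(\Gamma_0) \ge 2$ this produces at least $2|X_s|$ edges at $X_s$, of which at most $|X_b|$ meet $X_b$ (again by (P3)); so $\mathcal{E}_1$ contains at least $2|X_s| - |X_b|$ edges with pairwise disjoint external parts, forcing $|Y| \ge 2|X_s| - |X_b|$. The contradiction then comes from a dichotomy. If $|X_b|$ is not negligible compared with $|X_s|$ — precisely $|Y| \le |X_b|\log(n)^{1/4}$, which holds once $|X_s| = O(|X_b|\log(n)^{1/4})$ — then (P5) with $U = X_b$, $W = Y$ bounds the number of edges of $\mathcal{E}_1$ with $X$-vertex in $X_b$ by $\tfrac12 \varepsilon \log(n)\,|X_b|$, contradicting the lower bound $(1-o(1))\varepsilon\log(n)\,|X_b|$. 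Otherwise $X$ is built almost entirely out of small vertices — and by (P2), $|X_s| \le n^{0.9}$, so this regime occurs only for rather small $X$ — in which case the $2|X_s| - |X_b|$ disjoint external parts, together with a codegree count (via (P4)) showing that covering the $\gtrsim \varepsilon\log(n)\,|X_b|$ once-edges through $X_b$ costs still more vertices of $Y$, pushes $|Y|$ past $2|X|$.

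Finally, connectivity is a short consequence once $\Gamma_0$ is a $(n/4,2)$-expander. Any connected component $C$ has $|C| > n/4$, for otherwise taking $X = C$, $Y = \emptyset$ in the expander property yields an edge with exactly one vertex in $C$, impossible inside a component. Hence there are at most three components; and if $C$ is a smallest one, with $n/4 < |C| \le n/2$, then taking $X$ to be a subset of $C$ of size $\lfloor n/4\rfloor$ and $Y = C \setminus X$ gives $|Y| < 2|X|$, so the expander property produces an edge $e \subseteq C$ with $e \cap Y = \emptyset$, i.e.\ $e \subseteq X$, contradicting $|e| = r > 1 = |e \cap X|$; so $\Gamma_0$ is connected. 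The main obstacle is the small-$X$ case: the three inputs (P3), (P4), (P5) each dominate a different range of the ratio $|X_b|/|X_s|$, and making them overlap — especially for sets $X$ made mostly of small vertices, where the degree/(P5) counting is too weak and one must extract everything from the (P3) disjointness — is the delicate part.
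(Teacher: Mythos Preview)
Your large-$|X|$ case via (P7) matches the paper, and your connectivity argument is correct (the paper instead appeals to (P7) once more to connect components of size $>n/4$, but your route through the expander property alone also works).

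The genuine gap is the second branch of your dichotomy for small $|X|$. When $|Y|>|X_b|\log(n)^{1/4}$ you can no longer apply (P5) to $W=Y$, and you fall back on a ``codegree count via (P4)''. But (P4) on a pair only gives codegree $\le 2\log(n)^{3/4}$, so a single vertex of $Y$ can absorb up to $2|X_b|\log(n)^{3/4}$ of the $X_b$-once-edges; covering $\varepsilon\log(n)\,|X_b|$ of them therefore forces only $\Omega(\log(n)^{1/4})$ new vertices into $Y$ beyond your $2|X_s|-|X_b|$. You need $3|X_b|$ more to reach $2|X|$, and for, say, $|X_b|\sim\log n$ this fails badly. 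The dichotomy as stated does not close, and your own closing remark that this regime ``is the delicate part'' is exactly right --- it is not handled.

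The paper avoids the dichotomy by splitting $Y$ instead of thresholding on its size: set $Y_1=Y\cap N(SMALL)$ and $Y_2=Y\setminus Y_1$. Your disjoint-external-parts argument already gives $|Y_1\cup X_b|\ge 2|X_s|$, since those external parts all lie in $N$. For the $X_b$ side, (P3) says each $u\in X_b$ lies in at most one edge meeting $N\setminus\{u\}\supseteq X_s\cup Y_1$, so all but $|X_b|$ of the $\gtrsim\varepsilon\log(n)\,|X_b|$ once-edges through $X_b$ must hit $Y_2$; now apply (P5) with $U=X_b$ and $W=Y_2$ to force $|Y_2|>|X_b|\log(n)^{1/4}$. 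Summing, $|Y|\ge(2|X_s|-|X_b|)+|X_b|\log(n)^{1/4}\ge 2|X|$. The point is that (P5), not (P4), remains the correct tool for the $X_b$ contribution --- but it must be applied to the part of $Y$ lying outside $N(SMALL)$, not to $Y$ itself.
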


\subsection{Why we're done modulo proofs of the above}\label{sec:pf-main-ii}
\begin{proof}[Proof of Theorem \ref{stopping} (ii)]
Let $G = \mathcal{H}(T_2)$ and let $\Gamma_0\subset G$ as in the statement of Lemma \ref{random has nice conditions}. Then by definition, $|E
(\Gamma_0)| \le \eps n \log n$ and by Lemma \ref{theorem conditions imply expansive}, $\Gamma_0$ is a connected $(n/4, 2)$-expander.
Now we start with $\Gamma_0$ and iteratively add boosters until we arrive at a Hamiltonian hypergraph. Clearly this cannot be repeated more than $n$ times as the length of the longest path increases at each step. Also since at each step we have an $(n/4, 2)$-expander with at most $\eps n \log n + n$ many edges, Lemma \ref{stopping contains boosters} guarantees the existence of a Hamiltonian cycle or a booster to add.
\end{proof}

%

\subsection{Proofs of Lemmas}

\begin{proof}[Proof of  Lemma \ref{boosters in expanders}]
Say $G$ is a $(k,2)$-expander, and suppose it is not Hamiltonian.  Let 
$P = v_1, v_2, \ldots, v_m$ be any longest path in $G$.  Suppose $e$ is an edge containing $v_m$.

\textbf{Case I:} suppose $e$ is not involved in the path.  Then $e$ cannot contain any vertices outside of $P$ or else we could add that to get a longer path.  Say $v_m \neq v_{j} \in e$.  Then we can add $e$ to our path and delete the edge $v_{j} \sim v_{j+1}$ to obtain a new path $v_1, v_2, \ldots , v_j, v_m, v_{m-1}, v_{m-2}, \ldots, v_{j+1}$.  \textit{Such a move is called a rotation.}

\textbf{Case II:} suppose $e$ \textit{is} involved in the path, and say $e$ is needed to connected $v_i$ to $v_{i+1}$.  Then we can replace this path via another rotation $v_1, v_2, \ldots, v_i , v_m, v_{m-1}, \ldots, v_{i+1}$.  (If $v_{i+1} = v_m$ then this rotation actually didn't do anything.)

For fixed vertex $v_1$ and initial path $P$, let $R(v_1)$ be the vertices that could possibly appear as right endpoints starting with $P$ and doing rotaitons.  Let $R^{\pm} = \{v_i : \{v_{i-1}, v_{i+1}\} \cap R \neq \emptyset \}$ (with vertices numbered as in initial path).  Then notice that edges in case I intersect $R^{\pm}$ in (at least) $r-1$ points since for every $v_m \neq v_i \in e$, we can rotate so that $v_{i+1}$ is at the end.  And if $e$ is as in case II, then $v_i \in R^{\pm}$.  Finally (just as in P\'{o}sa's original argument), the set $R^{\pm}$ is the same for the orginal path as it is for any rotation.

If $e$ is an edge containing some $x \in R$ then $e$ must meet $R^{\pm}$ in at least one vertex \textit{other than $x$}.  Therefore, if $e$ meets $R$ in exactly one point, then it must meet $R^{\pm} \setminus R$.  Furthermore, $|R^{\pm} \setminus R| \leq |R^{\pm}| < 2 |R|$ (with strict inequality since every element of $R$ has at most $2$ neighbors except for the rightmost, which has only $1$).  And since $G$ is expansive, $|R| \geq k$.

So for each vertex that can be chosen as a left endpoint of a longest path, there are at least $k$ right endpoints we can have.  And by reversing the roles of left and right, we have at least $k$ left endpoints possible.   Suppose $(u,v)$ is a pair with these vertices as endpoints of a longest path, say $P$.  If $e$ is an edge of the hypergraph not contained in $P$, then we cannot have $\{u,v\} \subseteq e$.  Otherwise, if $P$ already contains all the vertices this would be a Hamiltonian cycle.  And if $P$ does not contain all the vertices, let $x$ be a vertex not on $P$.  Then since the graph is connected, there is a path from $x$ to the cycle $P + e$.  The last step of this path must be of the form $u \sim v_{j}$ for some vertex $v_j$ and some $u$ not in the path.  But then we can get a longer path by including this edge and $u$ (and deleting at most one edge of $P+e$ to use when connecting $u$ to this cycle).

Thus, for each $(u,v)$, there are at least $\binom{n-2}{r-2} - (n-1)$ \textit{booster edges} containing $u$ and $v$ (where the ``$-(n-1)$" is to avoid counting any edges already contained in the path).  Summing over all choices of $(u,v)$ [at least $k^2$] we have
\[
r \cdot (r-1) \cdot \# (boosters) \geq k^2 \left[ \binom{n-2}{r-2}- (n-1) \right].
\]
In the case $r=2$, we can ignore the $(n-1)$ term completely since if $u \sim v$, then the longest path contains only 2 vertices (not possible in a connected graph on $3$ vertices).  And in the case $r=3$, then $(n-1)$ term can be replaced by $1$ since there is at most one edge used in the path that also contains $\{u,v\}$.  In either event, we obtain $\# (boosters) \geq k^2 n^{r-2} c_r$ for some constant $c_r > 0$.
\end{proof}

\hrulefill

The proofs of Lemmas \ref{stopping contains boosters} and \ref{random has nice conditions} are very similar to those which appear in Krivelevich \cite{krivelevich},  Alon-Krivelevich \cite{alon-krivelevich} and Devlin-Kahn \cite{kahn}. Thus we have deferred their proofs to the Appendix.

\hrulefill

\begin{proof}[Proof of Lemma \ref{theorem conditions imply expansive}]
Let $S$ be a subset of $[n]$, and say $S_1 = S \cap SMALL(G)$ and $S_2 = S \setminus S_1$.

\tbf{Case I:} 
Suppose $n/4 \geq |S| \geq n / \log(n) ^{1/2}$.  Let $Y$ be a set disjoint from $S$ such that $Y$ covers $S$ (i.e., every edge meeting $S$ exactly once also meets $Y$) and $|Y| < 2 |S|$.  Then let $W = [n] \setminus (S \cup Y)$, then (because $|S| \leq n/4$), we have $|W| \geq n/4$.  But (P7) implies [after fist making $S$ and $W$ smaller as needed] that there's an edge meeting $S$ exactly once and $W$ in $r-1$ spots,  a contradiction.

\tbf{Case II:} Suppose $|S| \leq n / \log(n)^{1/2}$.  Suppose $Y$ is a set disjoint from $S$ such that $Y$ covers $S$ and $|Y| < 2|S|$.  Say $Y_1 = Y \cap N(SMALL)$ (i.e., each vertex of $Y_1$ is adjacent to something in $S_1$), and let $Y_2 = Y \setminus Y_1$.

Then $Y_1 \cup S_2$ covers $S_1$ and $Y \cup S_1$ covers $S_2$ 
Because $Y_1 \cup S_2$ covers $S_1$, we have
\[
|Y_1 \cup S_2| \geq 2 |S_1|
\]
because the edges of $S_1$ are sufficiently spread out by (P3), and each vertex is on at least 2 edges.

Now by (P4) there are at least $|S_2| (\varepsilon \log(n) - \log(n)^{3/4})$ edges that intersect $S_2$ exactly once.  And for each $u \in S_2$, there is at most one edge through $u$ meeting $S_1 \cup Y_1$ by (P3).  Therefore, there are at least $|S_2| (\varepsilon \log(n) - \log(n)^{3/4} -1)$ edges meeting $S_2$ exactly once and not meeting $S_1 \cup Y_1$ at all.  So there are at least $|S_2| (\varepsilon \log(n) - \log(n)^{3/4} -1) > |S_2| \varepsilon \log(n) /2$ edges that hit $S_2$ exactly once and then also hit $Y_2$.  Therefore, by (P5) we have $|Y_2| \geq |S_2| \log(n) ^{1/4}$.
So in total, we have
\[
|Y| = |Y_1| + |Y_2| \geq |Y_1 \cup S_2| - |S_2| + |Y_2| \geq 2 |S_1| -|S_2| + |S_2| \log(n) ^{1/4} \geq 2 |S_1| + 2|S_2|
\]
again, a contradiction thereby completing Case II.

Finally, to see that  $\Gamma_0$ is connected, note that $(n/4, 2)$-expansive implies that $\Gamma_0$ has no connected component of size less than $n/4$. But then (P7) implies that any disjoint sets of size at least $n/4$ have an edge between them.
\end{proof}

\section{Weak Berge Hamiltonicity}\label{sec:stopping-weak}
In this section we prove Theorem \ref{stopping} (i), i.e., that $\scr{H}(T_1)$ almost surely contains a weak Hamiltonian Berge cycle. This resolves a conjecture of Poole from \cite{poole}.
The proof is almost the same as in the previous section (and in fact, we can reuse most of the previous results). In this section we sketch the proof, pointing out what changes when dealing with weak Hamiltonicity.

\begin{proof}
[Proof sketch of Theorem \ref{stopping} (i)]
\begin{definition}
A hypergraph is a \emph{weak $(k,\alpha)$-expander} iff the following happens.  If $X, Y$ are disjoint subsets of vertices, $|Y| < \alpha |X|$, and every edge meeting $X$ is contained in $X \cup Y$, then $|X| \geq k$.
\end{definition}

\begin{remark}
We use the word ``weak'' here only to refer to weak Hamiltonicity. The notions of \textit{weak expansive} and \textit{expansive} are incomparable.  weak-$(k,\alpha)$-expansive means for all $|X| \leq k$, we have $\alpha |X| \le |N(X) \setminus X|$. 
\end{remark}
In this section, the notion of ``booster'' now refers to an edge whose addition increases the length of the longest \emph{weak} Berge path or introduces a \emph{weak} Hamiltonian Berge cycle.
The corresponding Lemmas in Section \ref{sec:lemmas} and their proofs are virtually the same except for the following slight changes.
\begin{itemize}
\item \tbf{Lemma \ref{boosters in expanders}}: Use the weak notions of $(k,2)$-expander
 and Hamiltonicity. For the proof, notice that Case II of the proof of Lemma \ref{boosters in expanders} doesn't matter (we can reuse edges even if they're already in the path).  So we see that every edge meeting $R$ at some point $v$ must be contained in $\{v\} \cup R^{\pm}$.  Thus, each edge meeting $R$ is contained in $R \cup (R^{\pm} \setminus R)$.  We also know that $|R^{\pm} \setminus R| \leq |R^{\pm}| < 2 |R|$, so by weak-expansion, we know $|R| \geq k$.  The rest of the proof proceeds as before.

\item \tbf{Lemmas \ref{stopping contains boosters} and \ref{random has nice conditions}}: In the statements, use $G = \mathcal{H}(T_1)$ and the weak notions of expansion and Hamiltonicity. The proofs remain unchanged.
 
 
 \item \tbf{Lemma \ref{theorem conditions imply expansive}}: For the statement, suppose $\delta(\Gamma_0) \ge 1$ and the 4 conditions and conclude weak expansion. The proof is exactly the same except for the statement ``$|Y_1 \cup S_2| \geq 2|S_1|$."  In this case, we know that every edge meeting $S_1$ is contained in $Y_1 \cup S_2 \cup S_1$.  By (P3), we also know that every edge meeting $S_1$ intersects it exactly once and also that any two edges meeting $S_1$ do not intersect outside of $S_1$.  And since $\delta(\Gamma_0) \geq 1$, there are at least $|S_1|$ edges meeting $S_1$, and (by (P3)) the union of these edges is at least at least $(r-1)|S_1|$ vertices outside of $S_1$.  This gives us $|Y_1 \cup S_2| \geq (r-1) |S_1| \geq 2 |S_1|$ (since $r \geq 3$), which is stronger than what is needed anyway.  The rest of the proof is identical.

In fact, this proof shows $\Gamma_0$ satisfying the assumptions is a \textit{weak}-$(n/4, r-1)$-expander.  (The idea being that there's a perfect matching covering SMALL($\Gamma_0$), and the rest of the graph is extremely expansive.)
 \end{itemize}
With these adapted Lemmas, we can finish the proof of Theorem \ref{stopping} (i) in exactly the same fashion as the proof of Theorem \ref{stopping} (ii) in Section \ref{sec:pf-main-ii}.
\end{proof}

\section{$k$-out model}\label{sec:k-out}

Before proving Theorem \ref{thm: k-out-berge-main},
we prove the following, which justifies our claim that it does not matter whether the edges in $\kout{n}{r}{k}$ are chosen with replacement (since almost surely the selected edges are distinct).
\begin{lemma}\label{distinct edges}
For any fixed $k$ and $r \geq 3$, $\kout{n}{r}{k}$ almost surely has exactly $nk$ edges.
\end{lemma}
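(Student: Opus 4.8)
The plan is to bound the probability that any two of the $nk$ selected edges coincide, via a union bound over pairs, and show this probability tends to $0$. First I would fix the convention that for each vertex $v$ the $k$ edges in $E_v$ are chosen with replacement from the $\binom{n-1}{r-1}$ sets containing $v$, so that a priori $\kout{n}{r}{k}$ is a multiset of $nk$ edges; the claim is that almost surely all $nk$ are distinct. There are two sources of coincidence to rule out: (a) two edges $e_i, e_j \in E_v$ chosen for the \emph{same} vertex $v$ are equal, and (b) an edge chosen for $v$ equals an edge chosen for a different vertex $w$.

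For case (a): for a fixed $v$ and a fixed pair of the $k$ draws, the probability they give the same edge is exactly $1/\binom{n-1}{r-1} = \Theta(n^{-(r-1)})$. Union bounding over the $n$ vertices and the $\binom{k}{2}$ pairs of draws gives a contribution of order $n \cdot k^2 \cdot n^{-(r-1)} = O(n^{2-r})$, which is $o(1)$ since $r \geq 3$. For case (b): fix vertices $v \neq w$ and one draw from $E_v$ and one from $E_w$. Conditioning on the edge $e$ drawn for $v$, this edge contains $w$ with probability $\binom{n-2}{r-2}/\binom{n-1}{r-1} = (r-1)/(n-1)$; given that it does contain $w$, the probability the independent draw for $w$ equals this particular $e$ is $1/\binom{n-1}{r-1}$. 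So the probability that the chosen edge for $v$ equals the chosen edge for $w$ is at most $\frac{r-1}{n-1}\cdot\binom{n-1}{r-1}^{-1} = O(n^{-r})$. Union bounding over the $O(n^2)$ ordered pairs of vertices and the $k^2$ pairs of draws gives $O(k^2 n^{2-r}) = o(1)$.

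Summing the two bounds, the probability that the $nk$ chosen edges are not all distinct is $O(n^{2-r}) = o(1)$, so almost surely they are distinct, i.e., $\kout{n}{r}{k}$ has exactly $nk$ edges. I would also note that once distinctness holds, the with-replacement and without-replacement models coincide (conditioning on the all-distinct event, the without-replacement model is exactly the with-replacement model), which is the point being invoked in the remark preceding the lemma.

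The main obstacle here is essentially bookkeeping rather than conceptual: one must be slightly careful that the events ``edge drawn for $v$ equals edge drawn for $w$'' genuinely factor through the conditioning on whether $w \in e$, and that the draws for distinct vertices are independent (which they are by construction of the model). Everything else is a routine first-moment computation, and the constraint $r \geq 3$ is exactly what makes the exponent $2-r$ negative.
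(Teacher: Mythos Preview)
Your proof is correct and follows essentially the same approach as the paper: a union bound over pairs of draws, split into the same-vertex and different-vertex cases, with the key probability for the latter being $\binom{n-2}{r-2}/\binom{n-1}{r-1}^2 = \frac{r-1}{(n-1)\binom{n-1}{r-1}} = O(n^{-r})$, exactly matching your conditioning computation. One minor wording slip: in your closing remark the direction is reversed---it is the \emph{with}-replacement model conditioned on all draws being distinct that coincides with the \emph{without}-replacement model---but this does not affect the argument.
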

\begin{proof}
Suppose the edges chosen to form $\kout{n}{r}{k}$ are labelled as $e_v ^{(j)}$ where $v \in V$ and $j \in \{1, 2, \ldots , k\}$ so that $E_v = \{e_v ^{(j)} : j\}$.  If $v \neq u$ and $i, j$ are fixed, then $\mathbb{P} \left( e_v ^{(i)} = e_u ^{(j)} \right)$ is at most ${n-2 \choose r-2} / {n-1 \choose r-1}^2 \sim c_r n^{-r}$, for some\footnote{Here we use the convention that for functions $f$ and $g$, we write $f \sim g$ to mean $\lim_{n \to \infty} f / g = 1$.} constant $c_r$ depending only on $r$.  Therefore, the probability that there exist edges $e_{v} ^{(i)} = e_{u} ^{(j)}$ with $v \neq u$ is asymptotically at most $(nk)^2 c_r n^{-r}$, which tends to $0$ since $k$ is fixed and $r > 2$.  Moreover, if we are considering the model where $e_v ^{(1)}, e_v ^{(2)}, \ldots$ are selected with replacement, then the probability that there are two of these that are equal is at most $k^2 n {n-1 \choose r-1} / {n-1 \choose r-1}^2 \sim k^2 c_r ' n^{-r+2}$, which also tends to $0$.  Thus, regardless of whether or not we select the edges of $\kout{n}{r}{k}$ with replacement, when $r \geq 3$, the $r$-graph almost surely has $nk$ edges.
\end{proof}

First we handle the case of (ordinary) Berge Hamiltonicity.
\begin{theorem}\label{thm: k-out-berge}
For any fixed $r \geq 3$, $\kout{n}{r}{2}$ almost surely has a Hamiltonian Berge cycle, whereas $\kout{n}{r}{1}$ almost surely does not.
\end{theorem}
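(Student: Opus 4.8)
The plan is to prove Theorem~\ref{thm: k-out-berge} by treating the positive and negative parts separately, with the positive part being the substantial one.

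For the easy negative direction ($\kout{n}{r}{1}$ almost surely has no Hamiltonian Berge cycle), I would argue via a minimum-degree obstruction analogous to the graph case. In $\kout{n}{r}{1}$ each vertex selects exactly one edge, so the only edges through a vertex $v$ are $v$'s own chosen edge together with those edges selected by other vertices that happen to contain $v$. A Hamiltonian Berge cycle requires every vertex to have degree at least $2$. The expected number of vertices of degree exactly $1$ is $\Theta(n)$ (a vertex $v$ has degree $1$ roughly when none of the other $n-1$ vertices choose an edge through $v$, which happens with probability bounded away from $0$); a second-moment computation then shows almost surely there is such a vertex (in fact linearly many), so almost surely there is no Hamiltonian Berge cycle. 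I would present the first- and second-moment estimates briefly, noting that $\mathbb{P}(u$ picks an edge containing $v) = \binom{n-2}{r-2}/\binom{n-1}{r-1} = (r-1)/(n-1)$, so $\mathbb{P}(d(v)=1)\to e^{-(r-1)}>0$ by Poisson approximation, and covariances between distinct $v$ are negligible.

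For the positive direction ($\kout{n}{r}{2}$ almost surely has a Hamiltonian Berge cycle), the strategy is to reuse the entire booster/expansion machinery of Section~\ref{sec:stopping}. I would set $G = \kout{n}{r}{2}$ and show that $G$ (or a suitable subgraph playing the role of $\Gamma_0$) is almost surely a connected $(n/4,2)$-expander with few edges, and then that $G$ almost surely contains a booster for every sparse expander subgraph; Lemmas~\ref{boosters in expanders} and~\ref{theorem conditions imply expansive} then finish the argument exactly as in Section~\ref{sec:pf-main-ii}. Concretely, I would use the fact that $\kout{n}{r}{2}$ stochastically contains (and is contained in, up to lower-order corrections) an Erd\H{o}s--R\'enyi-type random $r$-graph at the relevant density: standard coupling shows $\kout{n}{r}{2}$ contains a copy of $\er{n}{r}{p}$ with $p$ of order $1/n^{r-1}$, which is already enough for the expansion properties (P3)--(P7) of Lemma~\ref{random has nice conditions} with plenty of room, since those only need density $\varepsilon\log n / n^{r-1}$ worth of edges. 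Alternatively, one can verify (P3)--(P7) directly for $\kout{n}{r}{2}$ using the independence of the vertex choices. Since $\delta(\kout{n}{r}{2}) \geq 2$ trivially (each vertex selects $2$ distinct edges, distinct almost surely by Lemma~\ref{distinct edges}), the minimum-degree hypothesis of Lemma~\ref{theorem conditions imply expansive} is automatic.

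The main obstacle is that $\kout{n}{r}{2}$ does \emph{not} dominate an Erd\H{o}s--R\'enyi $r$-graph dense enough to directly quote Lemma~\ref{stopping contains boosters} (which was stated for $\mathcal{H}(T_2)$, a much denser object with $\Theta(n\log n)$ edges), so the booster-absorption step must be re-proved for the sparse model. I expect to handle this by the usual ``splitting'' trick: reveal $\kout{n}{r}{1}$ first to obtain the expander $\Gamma_0$ (via (P3)--(P7), which hold with room to spare), then use the \emph{second} round of edges---an independent fresh copy of $\kout{n}{r}{1}$, still $\Theta(n)$ new edges with enough randomness---as the reservoir of boosters: for a fixed sparse expander $\Gamma$, Lemma~\ref{boosters in expanders} gives $\Omega(n^{r-2})$ booster edges out of $\binom{n}{r} = \Theta(n^r)$ total, so each new random edge is a booster with probability $\Omega(n^{-2})$; revealing $\Theta(n)$ fresh edges one at a time, each conditioned on the current (still sparse, still expander) graph, succeeds in finding a booster at each of the at most $n$ absorption steps with the failure probability a geometric-type bound that is $o(1)$ after a union bound over the $\leq n$ steps. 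Making the conditioning rigorous---ensuring that after adding boosters the graph remains a sparse $(n/4,2)$-expander so that Lemma~\ref{boosters in expanders} continues to apply, and that the fresh edges remain (conditionally) uniform---is the delicate bookkeeping, but it mirrors Fenner--Frieze~\cite{fenner} and Bohman--Frieze~\cite{bohman2009} and should go through.
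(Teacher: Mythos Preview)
Your negative direction is fine and essentially equivalent to the paper's: you use a first/second moment argument to find degree-$1$ vertices, while the paper phrases the same obstruction as a coupon-collector bound on $\bigcup_v (e_v\setminus\{v\})=V$. Either works.

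Your positive direction, however, takes a completely different route from the paper and contains a real gap. The paper does \emph{not} rerun the expansion/booster machinery for $\kout{n}{r}{2}$. Instead it gives a short reduction: label the two edges of each $v$ as $e_v^{-},e_v^{+}$, and form a digraph $D$ by drawing an arc $u\to v$ for every $u\in e_v^{-}\setminus\{v\}$ and an arc $v\to w$ for every $w\in e_v^{+}\setminus\{v\}$. Then $D$ is distributed exactly as the random $(r-1)$-in, $(r-1)$-out digraph, which Cooper--Frieze proved is almost surely Hamiltonian for $r-1\ge 2$. A directed Hamiltonian cycle in $D$ lifts to a Hamiltonian Berge cycle in $\mathcal H$ because each arc $(v_i,v_{i+1})$ corresponds to either $e_{v_i}^{+}$ or $e_{v_{i+1}}^{-}$, and by Lemma~\ref{distinct edges} these $2n$ hyperedges are almost surely all distinct.

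The specific gap in your plan is the claim that properties (P3)--(P7) of Lemma~\ref{random has nice conditions} ``hold with room to spare'' at density $p\asymp 1/n^{r-1}$. Properties (P6) and (P7) are \emph{lower} bounds on crossing-edge counts: (P6) asks for at least $n\log(n)^{1/3}$ edges between prescribed sets, but $\kout{n}{r}{2}$ has only $2n$ edges in total, so (P6) is simply false here, and the proof of Lemma~\ref{theorem conditions imply expansive} (Case~I) breaks down. You would need a bespoke expansion argument for the $k$-out model, not a citation of Lemma~\ref{random has nice conditions}. Similarly, your booster-absorption step is shakier than you indicate: you need up to $n$ successful booster hits, you have only $n$ fresh edges in the second round, and each fresh edge from vertex $v$ is uniform only over $r$-sets containing $v$ rather than over all of $\binom{[n]}{r}$, so you would also need to argue that boosters are well spread across vertices. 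These issues may be surmountable (and would mirror the genuinely delicate arguments of Bohman--Frieze for $r=2$), but they are not handled by the machinery of Section~\ref{sec:stopping} as written; the paper sidesteps all of this with the digraph reduction.
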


\begin{proof}[Proof of Theorem \ref{thm: k-out-berge}.]
First we will show that for $r \geq 3$, the graph $\kout{n}{r}{2}$ almost surely has a Hamiltonian Berge cycle.  Supposing $\mathcal{H}$ is selected from $\kout{n}{r}{2}$, we construct a random directed graph from $\mathcal{H}$ as follows.  For each $v$, we randomly pick one edge of $E_v$ and label it $e^{-} _v$, and we label the other edge $e^{+} _v$.  We then draw a directed arc from $u$ to $v$ for each $u \in e^{-} _v \setminus \{v\}$ and we draw a directed arc from $v$ to $w$ for each $w \in e^{+} _v \setminus \{v\}$.  Let $D$ be the directed graph obtained in this way.

The construction of $D$ has the same distribution as the process where for each $v$ we select $r-1$ `out' neighbors of $v$ and $r-1$ `in' neighbors of $v$.  This process results in the $(r-1)$-in, $(r-1)$-out random directed graph.\footnote{As is usual, the question of whether or not these neighbors are selected with replacement is irrelevant for the results we will cite.}  For this model, Cooper and Frieze \cite{digraph} proved that for each $k \geq 2$ the $k$-in, $k$-out directed graph is almost surely Hamiltonian.  Thus, 
there is almost surely an ordering of the vertices $v_1, \ldots, v_n$ such that $(v_i, v_{i+1})$ is an arc of $D$ for all $i$ (with indices viewed modulo $n$).

Each arc $(u, v)$ of $D$ corresponds to either $e_{u} ^+$ or $e_v ^{-}$ in $\mathcal{H}$, so if we chose such an edge of $\mathcal{H}$ for each arc $(v_1, v_{2}), (v_2, v_3) , \ldots , (v_{n}, v_{1})$, we cannot possibly choose the same edge twice unless there are two distinct indices such that $e_{v} ^{\pm} = e_{u} ^{\pm}$.  But by Lemma \ref{distinct edges}, almost surely all of these edges are distinct.  Thus, this directed Hamiltonian cycle in $D$ almost surely corresponds to a Hamiltonian Berge cycle in $\mathcal{H}$, as desired.

\hrulefill

On the other hand, we claim that $\kout{n}{r}{1}$ almost surely has vertices contained in only one edge (which would imply it is not Berge Hamiltonian).  For each $v \in V$, let $e_v$ denote the edge chosen by vertex $v$, and let $\tilde{e}_v$ be an $r$-set of $V$ chosen uniformly at random from among all $r$-sets containing $e_v \setminus \{v\}$.  Consider $U = \bigcup_{v \in V} (e_v \setminus \{v\})$ and $\tilde{U} = \bigcup _{v \in V} \tilde{e}_v$.  The set $U$ is equal to the vertices of $\kout{n}{r}{1}$ appearing in more than one edge, and by construction $U \subseteq \tilde{U} \subseteq V$.  Moreover, $\tilde{U}$ consists of the union of $n$ independent $r$-sets each chosen uniformly at random.\footnote{The set $\tilde{U}$ is introduced for a technical simplification arising from the fact that the sets $e_v \setminus \{v\}$ are not identically distributed, but the sets $\tilde{e}_v$ are.}  Thus, we have
\[
\mathbb{P} \left( \text{all vertices of $\kout{n}{r}{1}$ appear in at least $2$ edges} \right) = \mathbb{P}(U = V) \leq \mathbb{P}(\tilde{U} = V).
\]
The event $\tilde{U} = V$ is a special case of the famous \textit{generalized coupon collector problem}, where each edge corresponds to a ``coupon," each ``coupon" consists of $r$ symbols, and $\tilde{U}= V$ is the event that the union of $n$ random coupons contains all $|V|$ symbols (here, $|V| =n$).  This has been well-studied, and $\mathbb{P}(\tilde{U} = V)$ tends to $0$ since $r n \ll |V| \log |V|$ (see, e.g., \cite{erdosCoupon}).  Therefore, $\kout{n}{r}{1}$ almost surely has vertices contained in only one edge, and thus almost surely it is not Berge Hamiltonian.
\end{proof}

Finally, we handle the case of weak Berge Hamiltonicity in $k$-out $r$-graphs.
\begin{theorem}\label{thm: k-out-weak}
For any fixed $r\ge 4$, $\kout{n}{r}{1}$ almost surely has a weak Hamiltonian Berge cycle, whereas $\kout{n}{3}{1}$ almost surely does not.
\end{theorem}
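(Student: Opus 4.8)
The plan is to handle the two claims separately, and the easier one first. For the negative statement about $\kout{n}{3}{1}$: a weak Hamiltonian Berge cycle needs, for each vertex $v$, an edge containing $v$ and its two cyclic neighbors; when $r=3$ this forces the edge to be exactly $\{v_{i-1}, v_i, v_{i+1}\}$. So a weak Hamiltonian Berge cycle in a $3$-graph on $[n]$ exists iff, after some cyclic relabeling of the vertices, every consecutive triple $\{v_{i-1}, v_i, v_{i+1}\}$ is an edge; in particular the hypergraph must contain at least $n$ such ``consecutive'' triples arranged around a cycle, and every vertex must lie in at least $3$ edges (it is the center of one consecutive triple and an endpoint of two others). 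But $\kout{n}{3}{1}$ has only $n$ edges total (Lemma \ref{distinct edges}), and almost surely most vertices have degree $1$ or $2$ — indeed the degree of a fixed vertex is asymptotically $1 + \mathrm{Poisson}(2)$ by a standard computation, so almost surely there is a vertex of degree at most $2$. Such a vertex cannot participate in a weak Hamiltonian Berge cycle, so $\kout{n}{3}{1}$ almost surely has none.

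For the positive statement about $\kout{n}{r}{1}$ with $r\ge 4$: the idea is that each vertex chooses an edge containing $r-1 \ge 3$ other vertices, so we have plenty of room to try to route a cycle through these chosen edges. Concretely, for each $v$ let $e_v$ be its chosen edge; I want to find a cyclic ordering $v_1, \ldots, v_n$ of $[n]$ and an assignment $i \mapsto f_i \in \{e_{v_i}, e_{v_{i+1}}\}$ (or more simply $f_i := e_{v_i}$) with $v_i, v_{i+1} \in f_i$. Using $f_i := e_{v_i}$ reduces the problem to: find a cyclic ordering so that $v_{i+1} \in e_{v_i}$ for all $i$, i.e., a Hamiltonian cycle in the random functional-type digraph where $v$ points to the $r-1$ vertices of $e_v \setminus \{v\}$. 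This is precisely a $1$-in?/$(r-1)$-out structure; more robustly, one can mimic the argument in the proof of Theorem \ref{thm: k-out-berge}: since $r - 1 \ge 3 \ge 2$, split... but here $k=1$, so we cannot split into an in-set and an out-set. Instead I would argue directly with P\'osa rotations on weak Berge paths, exactly as in Section \ref{sec:stopping-weak}: show $\kout{n}{r}{1}$ almost surely contains a spanning weak-$(n/4, 2)$-expander with few edges (here $\Gamma_0 = \kout{n}{r}{1}$ itself works, since $\delta \ge 1$ and the expansion conditions (P3)--(P7) should hold for the $k$-out model just as for $\mathcal{H}(T_1)$ by routine first-moment estimates adapted to this model), then invoke Lemmas \ref{theorem conditions imply expansive}, \ref{boosters in expanders}, and the booster-containment lemma \ref{stopping contains boosters} (re-proved for $\kout{n}{r}{1}$), and iterate adding boosters. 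Since $\kout{n}{r}{1}$ has only $n$ edges, one must check it still contains boosters for each sparse expander — this is where $r \ge 4$ is used, because the number of ``missing'' $r$-sets one can add as a booster and the expansion all rely on $r-1$ being comfortably larger than $1$.

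I expect the main obstacle to be the positive direction: verifying that $\kout{n}{r}{1}$ almost surely satisfies the analogues of (P3)--(P7) and contains a booster for every sparse spanning weak-expander. The degree sequence here is essentially $1 + \mathrm{Poisson}(r-1)$ rather than concentrated around $\log n$, so $SMALL$ is all of $[n]$ and the argument must be reorganized: the point is that every vertex has degree $\ge 1$ and the chosen edges are ``spread out'' (no two chosen edges share more than one vertex, each vertex lies in $O(1)$ edges almost surely after trimming a negligible set), which by the weak version of Lemma \ref{theorem conditions imply expansive} — and crucially the observation there that a perfect matching covering $SMALL$ plus high expansion of the rest suffices — should still yield a weak-$(n/4, r-1)$-expander. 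I would also need to re-examine Lemma \ref{stopping contains boosters} in this sparse regime: the argument that almost surely every sparse expander has a booster uses that $\mathcal{H}(T_1)$ is ``rich'' enough; for $\kout{n}{r}{1}$ one has exactly $n$ edges, so one should instead resample: condition on $\kout{n}{r}{1}$ being a weak-expander, and then show that the additional randomness of a few extra rounds (or equivalently a union bound over the at most $\binom{n}{r}$ sparse expander subgraphs against the $n$ independent edge choices) produces a booster with high probability — this union bound is the delicate calculation, and it is exactly where the hypothesis $r \ge 4$ enters, since for $r=3$ the number of edges is too small relative to the number of potential expander configurations.
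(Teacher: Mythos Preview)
Your negative argument for $\kout{n}{3}{1}$ rests on a false premise. A weak Hamiltonian Berge cycle requires, for each $i$, an edge containing $\{v_i, v_{i+1}\}$---not a single edge containing all of $\{v_{i-1}, v_i, v_{i+1}\}$. So the edge $e_i$ can be any $3$-set through $v_i$ and $v_{i+1}$, and there is no reason every vertex must have degree at least $3$; a degree-$1$ vertex $v$ with unique edge $\{v,a,b\}$ can sit in a weak Berge cycle with cyclic neighbours $a,b$, using that edge twice. The paper's obstruction is different and genuinely structural: it shows $\kout{n}{3}{1}$ almost surely contains three degree-$1$ vertices all adjacent to a common vertex $x$. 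Each such degree-$1$ vertex forces $x$ to be one of its two cyclic neighbours, but $x$ has only two cyclic neighbours---contradiction.

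For the positive direction you actually write down the correct reduction and then walk away from it. You allow $f_i \in \{e_{v_i}, e_{v_{i+1}}\}$, which means you need only an \emph{undirected} Hamiltonian cycle in the graph where $xy$ is an edge whenever $y \in e_x$ or $x \in e_y$; this is precisely the (undirected) $(r-1)$-out random graph, and for $r-1 \ge 3$ Bohman--Frieze \cite{bohman2009} gives Hamiltonicity almost surely. That is the paper's entire argument. Your over-restriction to $f_i = e_{v_i}$ turns it into a directed problem (Hamiltonicity of the $(r-1)$-out, $0$-in digraph) for which no off-the-shelf result applies, and your fallback to the P\'osa/booster machinery of Sections~\ref{sec:stopping}--\ref{sec:stopping-weak} is both unnecessary and, as you note yourself, badly mismatched to this model: with $SMALL$ equal to all of $[n]$ the decomposition in Lemma~\ref{theorem conditions imply expansive} collapses, and the booster-containment count in Lemma~\ref{stopping contains boosters} relied on having $\Theta(n\log n)$ edges, not $n$.
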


\begin{proof}
[Proof of Theorem \ref{thm: k-out-weak}]
$\kout{n}{3}{1}$ almost surely contains three distinct vertices of degree 1 which all share a common neighbor. Thus this graph is almost surely not Hamiltonian.

On the other hand, we can embed an $(r-1)$-out graph in the $1$-out $r$-graph.  Namely, each vertex $x$ picks a hyper edge $S_x$, and we then include in our graph every edge of the form  $xy$  for $y$ in $S_x$ .
 This gives us an $(r-1)$-out graph, which has a Hamiltonian cycle when $r\ge 4$ (see \cite{bohman2009}).  A Hamiltonian cycle in this graph is a weak Hamiltonian Berge cycle in our hypergraph.

\end{proof}

\ignore{
\begin{eqnarray*}
\mathbb{E}[X_v] &=& \mathbb{P} \left( \bigcup_{u \neq v} \text{\{$v$ is contained in $e_u$\}} \right) \leq  \sum_{u \neq v} \mathbb{P} \left(\text{\{$v$ is contained in $e_u$\}} \right) =  (n-1) \dfrac{{n-2 \choose r-2}}{{n-1 \choose r-1}} = \dfrac{1}{r-1}.
\end{eqnarray*}
(And $X_v$ are roughly indpendent, so $\sum_{v} X_v$ will have low variance, and it will be concentrated about its mean [which is at most $n/(r-1)$].)

\begin{framed}
There should be a simpler argument that shows $\kout{n}{r}{1}$ is almost surely not Berge Hamiltonian.  Ideally and argument that doesn't need any second moment calculations.  I thought I had one at some point, but it would seem not.

\paragraph*{}Maybe...  Pick some set of vertices $T$, and reveal the edges $e_v$ for each $v \in T$.  If $|T| \ll \sqrt{n}$, then we can say that with high probability each edge thus revealed intersects $T$ in exactly one point.  And then...  Something...  Maybe say $T$ almost surely has vertices of degree $1$ (which true).

\paragraph*{}Or maybe we could just say that it's very unlikely to get a Hamiltonian Berge cycle in a $1$-out $r$-graph because that would mean it \textit{is} a Hamiltonian Berge cycle.  And perhaps that sounds unlikely simply because there aren't all that many Hamiltonian Berge cycles.

\paragraph*{}Or maybe we could just say that given a $1$-out $r$-graph, we could construct an $r$-out, $0$-in digraph, and those almost surely have vertices of in-degree $0$.

\paragraph*{}Ah!  Got it!
\end{framed}
}

\bibliographystyle{plain}
\bibliography{myBib.bib}

\appendix
\section{Proof of Lemma \ref{stopping contains boosters}}
\begin{proof}[Proof of Lemma \ref{stopping contains boosters}]
Let $G(m)$ be the random hypergraph with $m$ edges, and let $M$ be the stopping time for when $G(m)$ has minimum degree at least 2.  With high probability $M \in [m_1, m_2]$ where $m_1 = (n/r) \log(n)/2$ and $m_2 = 2(n/r) \log(n)$. 

Let $N = {n \choose r}$ and $\gamma = \varepsilon n\log n + n$.  Then by Lemma \ref{boosters in expanders}, any $(n/4, 2)$-expander has at least $N c_r'$ boosters (for some constant $c_r'$).  A union bound over $M$ and over the choice of $\Gamma$ (and $|\Gamma|$) gives the chance that $G$ contains some $\Gamma$ but none of its boosters is at most
\begin{eqnarray*}
\mathbb{P}(bad) - o(1) &\leq& \sum_{m= m_1} ^{m_2} \sum_{i \leq \gamma} \dfrac{{N \choose i} {N-i - c_r' N \choose m-i}}{{N \choose m}} \leq \sum_{m= m_1} ^{m_2} \sum_{i \leq \gamma} \exp \left[ \dfrac{-c_r' N (m-i)}{N-i}\right] \dfrac{{N \choose i} {N-i \choose m-i}}{{N \choose m}}\\
&\leq& \sum_{m= m_1} ^{m_2} \sum_{i \leq \gamma} \exp \left[ -c_r' m/100 \right ] {N \choose i} \left(\dfrac{m}{N}\right)^i \leq \sum_{m= m_1} ^{m_2} \sum_{i \leq \gamma} \exp \left[ -c_r' m/100 \right ] \left( \dfrac{e m}{i} \right)^{i}\\
&\leq& \sum_{m= m_1} ^{m_2} \gamma \exp \left[ -c_r' m/100 \right ] \left( \dfrac{e m}{\gamma} \right)^{\gamma} = o(1)
\end{eqnarray*}
(with the initial $o(1)$ corresponding to $M \notin [m_1, m_2]$).
\end{proof}

\section{Proof of Lemma \ref{random has nice conditions}}

\begin{proof}[Proof of Lemma \ref{random has nice conditions}]
Each piece is straightforward and only involves tail bounds for binomial coefficients. 
In fact (P2) and (P3) are already proven in \cite{kahn} (Lemma 5.1(c)).  
We will need the Chernoff bound.

\textbf{Chernoff:}   Say $X \sim Bin(N,p)$ and $\phi(x) = (1+x)\log(1+x)-x$.  Say $\mu = Np$ and $t \geq 0$.  Then we have
\begin{equation}\label{chernoff}
\mathbb{P}(X \geq \mu + t) \leq \exp \left[ -\mu \phi(t/\mu) \right].
\end{equation}

\textbf{(P1)} Let $(u, S)$ be a vertex $u \in [n]$ and a set of edges $S$ of size $|S| = t$ such that each contains $u$.  Then the expected number, $X$, of pairs $(u,S)$ of this form where $S \subseteq E(G)$ is
\[
\mathbb{E}[X] = n {{n-1 \choose r-1} \choose t} p^t \leq n \left[ \dfrac{{n-1 \choose r-1}e}{t} \right]^t  p^t = n \left[ \dfrac{{n-1 \choose r-1}e}{t} \right]^t  \left(c_r \dfrac{\log(n)}{{n-1 \choose r-1}} \right)^t = n \left[ \dfrac{e c_r \log(n)}{t} \right]^t,
\]
where in fact $c_r \in (1/2, 2)$ [since the stopping time is definitely in there].  But all the same, we're done by picking $t = C_r \log(n)$ for some sufficiently large constant $C_r$ (in fact $C_r = 10$ would suffice]).

\textbf{(P2) and (P3)}  are both proven in \cite{kahn}.  

\textbf{(P4)} Let $U$ be fixed and $|U|=u$.  Then \#(edges hitting $U$ twice) is stochastically dominated by a binomial with $p = c_r \log(n) / n^{r-1}$ and $N=c_r ' |U|^2 n^{r-2}$ 
So $\mu = Np = C_r \log(n) u^2 / n$, and set $t = u \log(n)^{3/4}/2$.  Then (using $\mu = o(t)$)
\[
\mathbb{P}(\# \geq 2t) \leq \mathbb{P}(\# \geq \mu + t) \leq \exp \left[ -\dfrac{C_r \log(n) u^2}{n} \left( \dfrac{n}{2.1C_r u \log(n) ^{1/4}} \log(n / (2C_r u \log(n) ^{1/4})) \right) \right].
\]
Taking a union bound over $U$ with $|U| = u$ and summing over $u$ gives
\begin{eqnarray*}
\mathbb{P}(\text{not (P4)}) &\leq& \sum_{u} {n \choose u} \exp \left[ -\dfrac{C_r \log(n) u^2}{n} \left( \dfrac{n}{2.1 C_r u \log(n) ^{1/4}} \log(n / (2C_r u \log(n) ^{1/4})) \right) \right]\\
&\leq& \sum_{u} ( en/u)^u \exp \left[ -\dfrac{\log(n)^{3/4} u}{2.1} \log(n / (2 C_r u \log(n) ^{1/4})) \right]\\
&\leq& \sum_{u}\exp \left[u \log(en/u) -\dfrac{\log(n)^{3/4} u}{2.1} \log(n / (2 C_r u \log(n) ^{1/4})) \right]\\
&\leq& \sum_{u} \exp \left[ \dfrac{-u \log(n)^{3/4}}{20} \log \log(n) \right] = \mathcal{O}\left( \exp \left[ \dfrac{- \log(n)^{3/4}}{20} \log \log(n) \right] \right) = o(1).
\end{eqnarray*}
where the last line holds by summing the geometric series.  From the second to last line to the last, we lose some constant (absorbed in the `1/20') to take care of the $\log(en/u)$ term (and others).
\ignore{\begin{eqnarray*}
&=& \log(n/u) -\dfrac{\log(n)^{3/4}}{2.1} \log(n / ( u \log(n) ^{1/4})) = \log(n/u) \left[ 1 -\dfrac{\log(n)^{3/4}}{2.1} \right] + \dfrac{\log(n)^{3/4}}{2.1 \cdot 4}\log \log(n)\\
&\leq& \log(n/u) \left[-\dfrac{\log(n)^{3/4}}{2.1 \cdot (1\pm o(1))} \right] + \dfrac{\log(n)^{3/4}}{2.1 \cdot 4}\log \log(n)
\end{eqnarray*}
}

\textbf{(P5)} For every pair of disjoint vertex sets $U,W$ of sizes $|U| \leq \dfrac{n}{\log(n)^{1/2}}$ and $|W| \leq |U| \log(n) ^{1/4}$, there are at most $\dfrac{\varepsilon \log(n) |U|}{2}$ edges of $G$ meeting $U$ exactly once and also meeting $W$.  Say $U$ and $W$ are fixed and $|U| = u$ and (wlog) $|W|=u \log(n)^{1/4}$.  The number of edges meeting $U$ exactly once and also $W$ is bounded above by a binomial with parameters $p = c_r \log(n) / n^{r-1}$ and $N = |U| |W| n^{r-2}$.  So $\mu = Np = c_r u^2 \log(n)^{5/4} /n$, and set $t = \varepsilon u \log(n) / 4$.  Again using $\mu = o(t)$, and taking a union bound over choices of $U$ and $W$, we have

\begin{eqnarray*}
\mathbb{P}(\text{not (P5)}) &\leq& \sum_{u =1} ^{n/\sqrt{\log(n)}} {n \choose u} {n \choose u \log(n)^{1/4}} \mathbb{P}(\# \geq 2t) \leq \sum_{u =1} ^{n/\sqrt{\log(n)}} {n \choose u \log(n)^{1/4}}^2 \mathbb{P}(\# \geq \mu + t)\\
&\leq& \sum_{u =1} ^{n/\sqrt{\log(n)}} \exp \left[2u \log(n)^{1/4} \log\left( \dfrac{n}{u \log(n)^{1/4}} \right) - \dfrac{\varepsilon u \log(n)}{4.1} \log\left( \dfrac{n}{u \log(n)^{1/4}} \right) \right]\\
&\leq& \sum_{u =1} ^{n/\sqrt{\log(n)}} \exp \left[- \dfrac{\varepsilon u \log(n)}{4.2} \log\left( \dfrac{n}{u \log(n)^{1/4}} \right) \right] \leq \sum_{u =1} ^{n/\sqrt{\log(n)}} \exp \left[- \dfrac{\varepsilon u \log(n)}{20} \log \log(n) \right]\\
&\leq & \sum_{u =1} ^{\infty} \exp \left[- \dfrac{\varepsilon u \log(n)}{20} \log \log(n) \right] = \mathcal{O} \left(\exp \left[- \dfrac{\varepsilon \log(n)}{20} \log \log(n) \right]\right) = o(1).
\end{eqnarray*}

\textbf{(P6)} Let $U$ have size $n/\log(n)^{1/2}$ and $W$ have size $n/4$.  Then the number of edges meeting $U$ exactly once and $W$ exactly $r-1$ times is a binomial random variable with $p = c_r \log(n) / n^{r-1}$ and $N= |U| |W|^{r-1} c'_r = c''_r n^{r} / \log(n)^{1/2}$.  So we have $\mu = Np = C_r n \sqrt{\log(n)}$ and set $t = n \log(n)^{1/3}$.  Now we need to use another part of Chernoff's bound that $P(X < \mu - t) \leq  \exp(- t^2 /(2\mu))$.

Then taking a union over $U$ and $W$, we have
\begin{eqnarray*}
\mathbb{P}(\text{not (P6)}) &\leq& {n \choose n/\sqrt{\log(n)}} {n \choose n/4 } \mathbb{P}(\# < t) \leq 4^{n} \mathbb{P}(\# \leq \mu - t) \leq 4^n \exp \left[ \dfrac{-t^2}{2\mu} \right]\\
&=& \exp \left[n \log(4) - \dfrac{n^2 \log(n) ^{2/3}}{2C_r n \log(n)^{1/2}} \right] = \exp \left[n \log(4) - \dfrac{n \log(n) ^{1/6}}{2C_r} \right] = o(1).
\end{eqnarray*}

\textbf{(P7)} Suppose we went from $G$ to $\Gamma_0$, and let's analyze the probabilty that there exists a pair $(U,W)$ violating (P7).  Then (since $G$ has (P3) and (P6) with high probability), the only thing that could have gone wrong is that every edge going across was missing.  This is bounded by
\begin{eqnarray*}
\mathbb{P}(\text{not (P7)}) &\leq& 4^{n} \prod_{u \in U} \left(\dfrac{{d_G (u) - e_G (u, W) \choose \varepsilon \log(n)}}{{d_G (u) \choose \varepsilon \log(n)}} \right) \leq 4^n \prod_{u \in U} \exp \left[ \dfrac{-\varepsilon \log(n) e_G (u,W)}{d_G (u)} \right]\\
&\leq& \exp \left[n \log(4) - \dfrac{\varepsilon \log(n) e_G (U,W)}{\Delta(G)} \right] \leq \exp \left[n \log(4) - \dfrac{\varepsilon \log(n) n \log(n)^{1/3}}{\Delta(G)} \right] = o(1),
\end{eqnarray*}
where we used (P1) to conclude $\Delta(G) = o(\log(n)^{4/3})$.
\end{proof}

\end{document}